\documentclass[10pt]{amsart}
\usepackage{graphicx}
\usepackage{amssymb}
\usepackage{amsmath}
\usepackage{amsthm}
\usepackage{epstopdf}
\DeclareGraphicsRule{.tif}{png}{.png}{`convert #1 `basename #1 .tif`.png}

\textwidth = 6.5in 
\textheight = 8.5in 
\oddsidemargin = 0.0in 
\evensidemargin = 0.0in 
\topmargin = 0.0in 

\newtheorem{theorem}{Theorem}
\newtheorem{corollary}[theorem]{Corollary}
\newtheorem{lemma}[theorem]{Lemma}
\newtheorem{definition}{Definition}

\newcommand{\dif}{\mathrm{d}}

\newcommand{\spn}{\mathrm{span}}

\begin{document}

\title{A Natural Connection on $(2,3)$ Sub-Riemannian Manifolds}

\author{Daniel R. Cole}
\address{17 Andiron Ln, Brookhaven, NY 11719}
\email{daniel.cole@aya.yale.edu}
\thanks{The author was supported by NSF grant DMS-0240058.}

\subjclass[2000]{Primary: 53C17; Secondary: 53B05, 53C05}

\keywords{sub-Riemannian geometry, affine connections, Heisenberg group}

\date{October 22, 2008}

\begin{abstract} We build an analogue for the Levi-Civita connection on Riemannian manifolds for sub-Riemannian manfiolds modeled on the Heisenberg group.  We demonstrate some geometric properties of this connection to justify our choice and show that this connection is unique in having these properties. 
\end{abstract}

\maketitle

\section{Introduction}

Sub-Riemannian geometry is an extension of Riemannian geometry in which, instead of defining a smooth, positive-definite inner product on the entire tangent space, we define our inner product only on a sub-bundle of the tangent space.  The result is a wonderfully complex geometry modeled on graded nilpotent Lie groups, instead of just $\mathbb{R}^n$.  

Sub-Riemannian geometry naturally arises in the study of configuration spaces, frame bundles, and principal bundles over Riemannian manifolds.  A beautiful example of a five-dimensional sub-Riemannian manifold is the configuration space of a striped billiard ball rolling on a billiards table: here, the Euclidean metric on the table induces an inner product on the two-dimensional sub-bundle of the tangent space to the configuration space that corresponds to rolling the ball along the surface.  Sub-Riemannian geometry is very strongly related to CR geometry and control theory.  It is also a natural setting for the study of hypoelliptic PDEs.  One recent application of sub-Riemannian manifolds is a model of the first layer of the brain's visual cortex.  Minimal surfaces in this model explain how our brains fill in gaps in our visual fields, and how certain optical illusions arise (see [HP2] for details).

While sub-Riemannian geometry have been studied in various forms for the past century, the differential geometry of sub-Riemannian manifolds is not very well developed at this time. This is due to the relative local complexity of sub-Riemannian manifolds in comparison to their Riemannian cousins.  The metric tangent cone of a bracket-generating sub-Riemannian manifold at any point is either a Carnot group (a graded nilpotent simply connected Lie group) or a quotient of a Carnot group (see [Bel] for a precise statement and proof of this result). The basic structures of Riemannian geometry, including the Levi-Civita connection and normal coordinates, owe their existence to our ability to easily identify the tangent space of a Riemannian manifold at a point (which is isomorphic to $\mathbb{R}^n$) to a local neighborhood of that point (which, in a Riemannian manifold, geometrically approximates $\mathbb{E}^n$). The fact that neighborhoods of points in all but the simplest sub-Riemannian manifolds gemetrically approximate non-abelian Carnot groups or their quotients complicates matters enormously.

This paper makes some progress towards describing the differential geometry of sub-Riemannian manifolds.  Specifically, we consider the simplest non-trivial examples of sub-Riemannian manifolds: those modeled after the three-dimensional Heisenberg group, $\mathbb{H}^1$, which is the simplest of all non-abelian Carnot groups.  Examples of such manifolds, which we call $(2,3)$ sub-Riemannian manifolds (this terminology will be made clear in Section 2) include the roto-translational group, used to model the first layer of the visual cortex (see [HP2]) and the frame bundle on the 2-sphere (see [Mon] for a description of how to induce a sub-Riemannian structure on a frame bundle).  We construct in this paper a natural connection on orientable $(2,3)$ sub-Riemannian manifolds, analogous to the Levi-Civita connection on Riemannian manifolds.  Orientability is, unfortunately, a necessity because, as we will show, this natural connection has a non-zero torsion tensor, and this non-zero torsion tensor induces a global frame on our manifold.

This paper is organized as follows.  In Section 2, we define our basic terms and set notation for the remainder of the paper.  In Section 3, we do a brief study of affine connections on the three-dimensional Heisenberg group.  Our contention is that the most natural choice for a connection on $\mathbb{H}^1$ is the unique affine connection that is compatible with the Lie algebra $\mathfrak{h}$ of $\mathbb{H}^1$: in other words, for all left invariant vector fields $V$ on $\mathbb{H}^1$, we should have \begin{equation} \nabla V = 0 \end{equation} The intuitive idea here is that in the Heisenberg group, or any Carnot group, the left invariant vector fields should be parallel, since they essentially define the structure of the Carnot group, and any tensor which defines the structure of a sub-Riemannian manifold should remain invariant under parallel transport.

In Section 4, we begin our general construction by studying compatible connections on $(2,3)$ sub-Riemannian manifolds.  For a general $(2,3)$ sub-Riemannian manifold, we do not have a Lie algebra to help define the geometric structure of the manifold, but we do have the sub-Riemannian structure, which can be expressed as a tensor.  A connection compatible with the sub-Riemannian structure of the manifold satisfies the relation \begin{equation} \nabla g = 0 \end{equation} where $g$ is the unique co-metric associated to the sub-Riemannian structure.  This is significantly different than any method previously employed to create a connection on sub-Riemannian manifolds or comparable geometric structures (like strictly pseudoconvex pseudohermitian manifolds).  The usual method is to begin by extending the fiber inner product $\left< \cdot, \cdot \right>$ on $\mathcal{H}$ to a full Riemannian metric on $M$.  See, for example, Hladky and Pauls's construction in [HP].  Thus, most constructions begin by making a choice of extension.  With strictly pseudoconvex pseudohermitian manifolds, the usual choice of connection is the Tanaka-Webster connection (see [Tan] and [Web]), but with this connection, too, there is an initial choice: in this case, it is the contact form $\eta$.

In Riemannian geometry, we see that two affine connections $\nabla$ and $\bar{\nabla}$ that are compatible with the Riemannian metric (and thus with the inverse co-metric) are equal if and only if their torsion tensors are equal. Theorem \ref{comptheorem} in Section 4 gives the analogue statement for compatible connections on $(2,3)$ sub-Riemannian manifolds: two affine connections $\nabla$ and $\bar{\nabla}$ that are compatible with the sub-Riemannian structure of $M$ are equal if and only if their torsion tensors are equal \emph{and} their horizontal curvature operators (to be defined in Section 3) are also equal.

In Section 5, we derive the tools we need to find a unique natural compatible connection for orientable $(2,3)$ sub-Riemannian manifolds.  Since compatible connections only differ in their torsion tensor and their horizontal curvature operator (which is also tensorial), we need only be concerned with assigning values to these two tensors smoothly throughout our manifold, and this is essentially a local problem (the only possible global issue is resolved by specifying that the manifold must be orientable).  In the case of Riemannian geometry, this problem is solved by "flattening out" the Riemannian metric around a point $p$ in such a way that first and second order differential operators at $p$ maintain their values, but the geometry in a neighborhood of $p$ is Euclidean.  We then assign the torsion tensor the same value at $p$ as it would have if the surrounding neighborhood were flat, as simulated by the flattening of the Riemannian metric that we did before.  This naturally forces the torsion tensor to be equal to the zero tensor at all points $p$, leading to the Levi-Civita connection being torsion-free.

For orientable $(2,3)$ sub-Riemannian manifolds, the equivalent process is to flatten out a neighborhood of a point $p$ so that the neighborhood looks like the Heisenberg group. In this case, we have an analogous condition on the differential operators, with operators of first, second, and third \emph{weighted} order remaining the same at $p$ (this weighting will be defined in Section 2).  This identification of differential operators at $p$ is only possible when the frame we are using to generate the differential operators has a certain bracket structure at $p$.  Such a frame will be called a Carnot frame, and Theorem \ref{carnotexistence} in Section 5 guarantees the existence of such frames at all points.  The sub-Riemannian structure we get on our neighborhood of $p$ by this process will be called a \emph{flattening} of the sub-Riemannian structure of $M$. Of course, there are many different possible flattenings, but they all yield the same values for the torsion tensor and the horizontal curvature operator.  The easiest way to see this is to show that the flattening generates a Carnot frame (and vice versa), and that all Carnot frames generate the same torsion tensor.  These are the results of Theorem \ref{carnotflatequiv} and Corollary \ref{carnottorsion}, respectively.  These results fix the torsion tensor, which, as noted before, is necessarily non-zero.  The horizontal curvature operator is zero for all flattenings, so this tensor is fixed as well.

Finally, in Section 6, we summarize all of our findings by defining the natural connection on any orientable $(2,3)$ sub-Riemannian manifold with the torsion tensor and horizontal curvature operator as above, and show that it agrees with the geometry of any flattening of the sub-Riemannian structure at any point in $M$.  This main result of the paper is Theorem \ref{maintheoremofpaper}.

The author would like to thank his advisor, S.\@ Pauls, for his continued guidance, R.\@ Hardt for his mentoring, and C.\@ Eyring for her support throughout the writing of this paper.  He would also like to thank C.\@ Douglas, B.\@ Paier, and R.\@ Dunning, along with R.\@ Hardt, for their attendance and helpful comments throughout a seminar that the author gave on this topic.

\section{Basic Definitions and Notation}

This section gives the basic definitions used in sub-Riemannian geometry.  See [Mon] and [Bel] for more detailed introductions to the subject.

Let $M$ is an orientable smooth manifold of dimension $n$.

\begin{definition} A sub-Riemannian structure on $M$ is an ordered pair $\left(\mathcal{H}, \left< \cdot, \cdot \right> \right)$, where $\mathcal{H}$ is a sub-bundle of the tangent bundle of dimension $m \leq n$ called the \emph{horizontal sub-bundle}, and $\left< \cdot, \cdot \right>$ is a fiber inner product on $\mathcal{H}$.
\end{definition}

Alternatively, as outlined in Montgomery, we can define a sub-Riemannian structure on $M$ using a co-metric $g$. Specifically, $g$ is the unique co-metric such that,  for all $p \in M$ and $\xi_1, \xi_2 \in T^\ast_p M$, \begin{equation} g(\xi_1, \cdot), g(\xi_2, \cdot) \in \mathcal{H}_p \end{equation} where the usual canonical identification is being made between $T_p M$ and the space of linear functionals on $T^\ast_p M$; and \begin{equation} g(\xi_1, \xi_2) = \left< g(\xi_1, \cdot), g(\xi_2, \cdot) \right> \end{equation} If, on an open neighorbood $U$ of $M$, $\mathcal{H}$ has an ordered orthonormal frame $\{X_1, \ldots, X_m\}$, then on $U$ we have that \begin{equation} g = X_1 \otimes X_1 + \cdots + X_m \otimes X_m \end{equation} This characterization of $g$ will be particularly useful for us in the following sections.

For all integers $k \geq 1$, we deine the sub-sheaf $\mathcal{H}^k$ by the following recursive definition: \begin{eqnarray} \mathcal{H}^1 &=& \mathcal{H} \\ \mathcal{H}^k &=& \left\{ f_1 V_1 + f_2 [V_2, W] \,|\, f_1, f_2 \in C^\infty(M),\, V_1, V_2 \in \mathcal{H}^{k-1}, \, W \in \mathcal{H} \right\} \quad \text{for $k \geq 2$}\end{eqnarray} Note that $\mathcal{H}^k$ is not necessarily a sub-bundle of $TM$ for $k \geq 2$ because the dimension of $\mathcal{H}^k$ may not be constant over all of $M$.

\begin{definition} The sub-bundle $\mathcal{H}$ is \emph{bracket generating} over $M$ if for some finite $r$, $\mathcal{H}^r = TM$. \end{definition}

\noindent Define $m_k(p)$ to be the dimension of $\mathcal{H}^k$ at $p$.  Define $r(p)$ to be the least integer $k$ such that $\mathcal{H}^k_p = T_p M$.

\begin{definition} Let $\mathcal{H}$ be a bracket generating sub-bundle. The \emph{growth vector} of $\mathcal{H}$ at $p$ is $r(p)$-tuple \begin{equation} \left( m_1(p), m_2(p), \ldots, m_{r(p)}(p) \right) \end{equation} \end{definition}

\begin{definition} A \emph{$(2,3)$ sub-Riemannian manifold} is a smooth manifold $M$ of dimension 3 coupled with a bracket generating sub-Riemannian structure $\left(\mathcal{H}, \left< \cdot, \cdot \right> \right)$ with $\mathcal{H}$ of dimension 2 such that $\mathcal{H}$ has growth vector $(2,3)$ at all points in $M$. \end{definition}

\noindent For the remainder of this paper, we will assume that $M$ is an orientable $(2,3)$ sub-Riemannian manifold with sub-Riemannian structure $\left(\mathcal{H}, \left< \cdot, \cdot \right> \right)$ equivalent to a co-metric $g$.

Let $U$ be an open neighborhood of $M$.  Let $\{X_1, X_2 \}$ is an ordered orthonormal frame for $\mathcal{H}$ on $U$ such that, defining $X_3 = [X_1,X_2]$, $\{X_1, X_2, X_3 \}$ is an oriented frame for $TU$ with orientation matching that of $M$.  Let $\{ \xi_1, \xi_2, \xi_3 \}$ be the dual frame to $\{X_1, X_2, X_3 \}$.

Let $p \in U$.  In Section 5, we will need to work with differential operators at $p$ of the form \begin{equation} X^\alpha_p \mathrel{\mathop:}= \left.X_{\alpha_1} X_{\alpha_2} \cdots X_{\alpha_N}\right|_p, \quad \alpha_i \in \{1, 2, 3\} \end{equation} where here we are using a multi-index notation with $\alpha = (\alpha_1, \alpha_2, \ldots, \alpha_N)$.  We define the \emph{order} $|\alpha|$ of $\alpha$ to be the length of the multi-index, and the \emph{weighted order} $|\alpha|_w$ of $\alpha$ to be the number of $\alpha_i$ equal to 1 or 2 plus twice the number of $\alpha_i$ equal to 3. If $\alpha$ has order $N$ or weighted order $M$, we say that $X^\alpha$ has order $N$ or weighted order $M$ at $p$, respectively.  

Admittedly, this is a ``quick and dirty'' way to define the weighted order of a differential operator on a sub-Riemannian manifold, ignoring issues of whether weighted order is well-defined, but since we will only be using this definition for bookkeeping and to shorten some definition statements, this definition is sufficiently rigorous for our purposes.  For a far more rigorous notion of weighted order of a differential operator on a sub-Riemannian manifold, see [Bel].

\section{Compatible Connections on the Heisenberg Group}

Recall the three-dimensional Heisenberg group $\mathbb{H}^1$.  This graded nilpotent Lie group has Lie algebra \begin{equation} \mathfrak{h} = \spn\{X_1, X_2, X_3\} \end{equation} where $X_1$, $X_2$, and $X_3$ satisfy the Heisenberg bracket relations \begin{equation} [X_1, X_2] = X_3 \qquad  [X_3, X_1] = 0 \qquad [X_2, X_3] = 0 \end{equation}  The group operation on $\mathbb{H}^1$ is given, as usual, by the Campbell-Baker-Hausdorff formula: for all $V_1, V_2 \in \mathfrak{h}$, \begin{equation} \exp(V_1) \circ \exp(V_2) = \exp\left(V_1 + V_2 + \frac {1} {2} [V_1,V_2]\right) \end{equation}  The frame $\{X_1, X_2, X_3\}$ is left invariant under this group action, as is its dual frame $\{\xi^1, \xi^2, \xi^3\}$.  Define a sub-Riemannian structure $\left(\mathcal{H}, \left< \cdot, \cdot \right> \right)$ on $\mathbb{H}^1$ such that \begin{equation} \mathcal{H} = \spn \{X_1, X_2 \} \end{equation} and $\{X_1, X_2\}$ is an oriented orthonormal frame for $\mathcal{H}$.  This sub-Riemannian structure is clearly left invariant under the above group action, as is the associated co-metric \begin{equation} g = X_1 \otimes X_1 + X_2 \otimes X_2 \end{equation}

\begin{definition}An affine connection $\nabla$ is \emph{compatible} with the Lie algebra $\mathfrak{h}$ if for all $V \in \mathfrak{h}$, \begin{equation} \nabla V = 0 \end{equation} \end{definition}

\begin{definition} An affine connection $\nabla$ is \emph{compatible} with $\left( \mathcal{H}, \left< \cdot, \cdot \right> \right)$, and thus its associated co-metric $g$, if \begin{equation} \nabla g = 0 \end{equation} 
\end{definition}

\begin{lemma} There exists a unique affine connection $\nabla$ compatible with $\mathfrak{h}$.  If $\nabla$ is compatible with $\mathfrak{h}$, then $\nabla$ is compatible with g, and thus with $\left(\mathcal{H}, \left< \cdot, \cdot \right> \right)$. \end{lemma}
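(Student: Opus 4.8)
The plan is to use the fact that $\{X_1, X_2, X_3\}$ is a global left-invariant frame on $\mathbb{H}^1$, so that an affine connection is completely pinned down by the functions $\Gamma^k_{ij}$ defined by $\nabla_{X_i} X_j = \sum_k \Gamma^k_{ij} X_k$, and conversely any smooth choice of such functions defines an affine connection via $C^\infty$-linearity in the first slot and the Leibniz rule in the second. Everything reduces to a computation in this frame.

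First I would prove uniqueness. Suppose $\nabla$ is compatible with $\mathfrak{h}$. Every left-invariant vector field has the form $V = \sum_j c^j X_j$ with the $c^j$ constant, so for any vector field $W$ we have $\nabla_W V = \sum_j c^j \nabla_W X_j$ (the $W(c^j)$ terms vanish). Since $\nabla V = 0$ must hold for all choices of constants $c^j$, we get $\nabla_W X_j = 0$ for $j = 1,2,3$ and all $W$; by $C^\infty(\mathbb{H}^1)$-linearity in $W$ this forces $\Gamma^k_{ij} \equiv 0$ for all $i,j,k$. Hence there is at most one connection compatible with $\mathfrak{h}$. For existence, I would simply declare $\Gamma^k_{ij} \equiv 0$, i.e.\ $\nabla_W\!\left(\sum_j f^j X_j\right) = \sum_j W(f^j) X_j$, check directly that this is tensorial in $W$ and Leibnizian in the second argument — so it is a genuine affine connection — and observe that by construction $\nabla X_j = 0$, hence $\nabla V = 0$ for every $V \in \mathfrak{h}$.

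For the second assertion, extend $\nabla$ to the $(2,0)$-tensor bundle by the usual Leibniz rule, and use that $g = X_1 \otimes X_1 + X_2 \otimes X_2$ globally on $\mathbb{H}^1$. Then for any vector field $W$,
\[
\nabla_W g = (\nabla_W X_1)\otimes X_1 + X_1 \otimes (\nabla_W X_1) + (\nabla_W X_2)\otimes X_2 + X_2 \otimes (\nabla_W X_2) = 0,
\]
since each $\nabla_W X_i = 0$. Thus $\nabla g = 0$, and because $g$ is exactly the co-metric associated to $\left(\mathcal{H}, \left< \cdot, \cdot \right>\right)$, this is precisely compatibility with the sub-Riemannian structure.

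I do not expect a genuine obstacle here; the statement is essentially bookkeeping once the right framing is fixed. The only points that warrant care are (i) justifying that setting all Christoffel symbols to zero in the non-coordinate frame $\{X_i\}$ still yields a well-defined affine connection — which is legitimate precisely because $\{X_i\}$ is a global smooth frame on $\mathbb{H}^1$ — and (ii) correctly invoking the tensorial extension of $\nabla$ when differentiating $g$, so that the frame computation above is valid. Everything else is immediate.
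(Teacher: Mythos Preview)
Your proof is correct and follows essentially the same approach as the paper: both observe that compatibility with $\mathfrak{h}$ forces $\nabla X_j = 0$ for $j=1,2,3$, which simultaneously gives uniqueness and existence since an affine connection is determined by its action on a global frame, and then compute $\nabla g$ via the Leibniz rule applied to $g = X_1 \otimes X_1 + X_2 \otimes X_2$. You simply spell out more carefully the well-definedness of the connection with vanishing Christoffel symbols in the non-coordinate frame, which the paper leaves implicit.
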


\begin{proof} The existence and uniqueness of $\nabla$ is clear since any affine connection is determined by its action on a frame for $\mathbb{H}^1$, and since $\nabla$ is compatible with $\mathfrak{h}$, we must have $\nabla X_1 = 0$, $\nabla X_2 = 0$, and $\nabla X_ 3 = 0$.  If $\nabla$ is compatible with $\mathfrak{h}$, then \begin{eqnarray} \nabla g &=& \nabla X_1 \otimes X_1 + X_1 \otimes \nabla X_1 + \nabla X_2 \otimes X_2 + X_2 \otimes \nabla X_2 \\ &=& 0 \otimes X_1 + X_1 \otimes 0 + 0 \otimes X_2 + X_2 \otimes 0 \\ &=& 0 \end{eqnarray} Thus $\nabla$ is compatible with $g$. \end{proof}

For any sub-Riemannian manifold $M$, define the torsion tensor $T$ of $\nabla$ by the usual formula \begin{equation} T(V_1, V_2) = \nabla_{V_1} V_2 - \nabla_{V_2} V_1 - [V_1, V_2] \end{equation} for all $V_1, V_2 \in \mathcal{X}(M)$.  Define the curvature operator $R$ as usual by \begin{equation} R(V_1, V_2)V_3 = \nabla_{V_1} \nabla_{V_2} V_3 - \nabla_{V_2} \nabla_{V_1} V_3 - \nabla_{[V_1, V_2]} V_3 \end{equation} for all $V_1, V_2, V_3 \in \mathcal{X}(M)$.  We make the following definition.

\begin{definition} Let $\{ X_1, X_2 \}$ be an oriented orthonormal frame for $\mathcal{H}$ on $U \in M$, and let $X_3 = [X_1, X_2]$.  We define the \emph{horizontal curvature operator} of $\nabla$ to be \begin{equation} R_\mathcal{H} V = R(X_1, X_2)V = \nabla_{X_1} \nabla_{X_2} V - \nabla_{X_2} \nabla_{X_1} V - \nabla_{X_3} V \end{equation} for all $V \in \mathcal{X}(U)$.
\end{definition}

We note that we can write any other oriented orthonormal frame $\{X_1',X_2'\}$ for $\mathcal{H}$ as \begin{equation} \label{changeofframe} X_1' = \cos \theta \, X_1 + \sin \theta \, X_2 \qquad X_2' = -\sin \theta \, X_1 + \cos \theta \, X_2 \end{equation} where $\theta : U \rightarrow \mathbb{R}$ is smooth.

\begin{lemma}
The horizontal curvature operator $R_{\mathcal{H}}$ is independent of the choice of oriented orthonormal frame $\{ X_1, X_2 \}$ for $\mathcal{H}$, and thus is well defined.
\end{lemma}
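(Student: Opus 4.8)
The plan is to reduce the statement to the tensoriality of the curvature operator $R$ together with the $SO(2)$-invariance of an antisymmetric bilinear form. The key observation, already recorded in the definition, is that for the frame $\{X_1, X_2\}$ with $X_3 = [X_1, X_2]$ one has
\begin{equation}
R_{\mathcal{H}} V = \nabla_{X_1}\nabla_{X_2} V - \nabla_{X_2}\nabla_{X_1} V - \nabla_{[X_1, X_2]} V = R(X_1, X_2) V ,
\end{equation}
since $\nabla_{X_3} V = \nabla_{[X_1, X_2]} V$. The advantage of this reformulation is that $R$ is a genuine $(1,3)$-tensor: it is $C^\infty(U)$-bilinear and antisymmetric in its first two arguments, so in particular $R(V, V) = 0$ for all $V \in \mathcal{X}(U)$. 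This is the standard fact about curvature operators of affine connections and uses neither torsion-freeness nor compatibility with $g$.

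Next I would bring in the change-of-frame formula \eqref{changeofframe}: any other oriented orthonormal frame $\{X_1', X_2'\}$ for $\mathcal{H}$ on $U$ has $X_1' = \cos\theta\, X_1 + \sin\theta\, X_2$ and $X_2' = -\sin\theta\, X_1 + \cos\theta\, X_2$ for some smooth $\theta \colon U \to \mathbb{R}$, and its horizontal curvature operator is $R_{\mathcal{H}}' V = R(X_1', X_2') V$ (applying the reformulation to the primed frame, with $X_3' = [X_1', X_2']$). Expanding $R(X_1', X_2')$ by $C^\infty(U)$-bilinearity --- this is exactly where function-linearity, not merely $\mathbb{R}$-linearity, is needed, since $\cos\theta$ and $\sin\theta$ are functions on $U$ --- the terms $R(X_1, X_1)$ and $R(X_2, X_2)$ drop out, and antisymmetry collapses the remaining two terms to $(\cos^2\theta + \sin^2\theta)\, R(X_1, X_2) = R(X_1, X_2)$. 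Hence $R_{\mathcal{H}}' V = R_{\mathcal{H}} V$ for every $V$. Since any two oriented orthonormal frames for $\mathcal{H}$ over an overlap are related by such a formula, the locally defined operators agree on overlaps, so $R_{\mathcal{H}}$ is globally well defined.

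I do not expect a serious obstacle: the only step demanding care is the first one --- observing that the defining expression for $R_{\mathcal{H}}$, which superficially depends on the bracket $X_3 = [X_1, X_2]$, is actually the value on the frame of the tensorial object $R$, so that all frame dependence is governed by the invariance of an alternating bilinear form under $SO(2)$. If one wished to avoid invoking tensoriality of $R$, a more computational alternative is available: a short application of the Leibniz rule for Lie brackets gives $X_3' = [X_1', X_2'] = X_3 - (X_1\theta)\, X_1 - (X_2\theta)\, X_2$, after which one expands $\nabla_{X_1'}\nabla_{X_2'} V - \nabla_{X_2'}\nabla_{X_1'} V - \nabla_{X_3'} V$ directly and verifies that the terms carrying derivatives of $\theta$ cancel. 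This works but is noticeably more tedious, so I would present the tensorial argument as the main proof.
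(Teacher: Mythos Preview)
Your proposal is correct and follows essentially the same approach as the paper: both arguments identify $R_{\mathcal{H}}V$ with $R(X_1,X_2)V$, invoke the change-of-frame formula~\eqref{changeofframe}, and use the $C^\infty$-bilinearity and antisymmetry of the curvature tensor to reduce $R(X_1',X_2')$ to $(\cos^2\theta+\sin^2\theta)R(X_1,X_2)=R(X_1,X_2)$. Your additional remarks on why function-linearity (not just $\mathbb{R}$-linearity) is needed, and the sketch of a direct computational alternative, go beyond what the paper records but are consistent with it.
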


\begin{proof}
Suppose $\{ X'_1, X'_2 \}$ be an oriented orthonormal frame for $\mathcal{H}$ on $U$.  Applying equation (\ref{changeofframe}) and the property of $R$ being an alternating tensor, we see that \begin{eqnarray} R(X'_1, X'_2) & = & R(\cos \theta \, X_1 + \sin \theta \, X_2, -\sin \theta \, X_1 + \cos \theta \, X_2) \\ & = & -\sin \theta \cos \theta R(X_1, X_1) + (\cos^2 \theta + \sin^2 \theta) R(X_1, X_2) + \sin \theta \cos \theta R(X_2, X_2) \\ & = & R(X_1, X_2) \end{eqnarray} so $R_\mathcal{H}$ is independent of the frame used to define it.
\end{proof}

Finally, we prove the following lemma on $\mathbb{H}^1$, to be used later.

\begin{lemma} \label{torsioncurvature} If $\nabla$ is compatible with $\mathfrak{h}$, then 
\begin{enumerate}
\item[(a)] $T(X_1, X_2) = -X_3$
\item[(b)] $T(X_2, X_3) = 0$
\item[(c)] $T(X_3, X_1) = 0$
\item[(d)] $R_\mathcal{H} V = 0$ for all $V \in \mathcal{X}\left(\mathbb{H}^1\right)$
\end{enumerate}
\end{lemma}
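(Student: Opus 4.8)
The plan is to exploit the single structural fact that defines $\nabla$: since $\nabla$ is compatible with $\mathfrak{h}$, we have $\nabla X_1 = \nabla X_2 = \nabla X_3 = 0$, and hence $\nabla_{X_j} X_i = 0$ for every pair $i,j \in \{1,2,3\}$. Parts (a)--(c) then follow by direct substitution into the definition of the torsion tensor, using only the Heisenberg bracket relations $[X_1,X_2] = X_3$, $[X_3,X_1]=0$, $[X_2,X_3]=0$. For instance, $T(X_1,X_2) = \nabla_{X_1}X_2 - \nabla_{X_2}X_1 - [X_1,X_2] = 0 - 0 - X_3 = -X_3$, and the other two are identical computations with a vanishing bracket.

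For part (d) the one extra observation needed is that $\nabla$ acts on an arbitrary vector field coefficient-wise with respect to the frame $\{X_1,X_2,X_3\}$. Writing $V = f_1 X_1 + f_2 X_2 + f_3 X_3$ for smooth $f_i$ and applying the Leibniz rule together with $\nabla_{X_j} X_i = 0$, I get $\nabla_{X_j} V = \sum_i (X_j f_i)\, X_i$. Iterating, $\nabla_{X_1}\nabla_{X_2} V = \sum_i (X_1 X_2 f_i)\, X_i$ and similarly with the indices swapped, while $\nabla_{X_3} V = \sum_i (X_3 f_i)\, X_i$. Substituting these into
\begin{equation}
R_\mathcal{H} V = \nabla_{X_1}\nabla_{X_2} V - \nabla_{X_2}\nabla_{X_1} V - \nabla_{X_3} V = \sum_i \bigl( X_1 X_2 f_i - X_2 X_1 f_i - X_3 f_i \bigr) X_i
\end{equation}
and recognizing $X_1 X_2 f_i - X_2 X_1 f_i = [X_1,X_2] f_i = X_3 f_i$, every summand cancels, so $R_\mathcal{H} V = 0$.

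There is essentially no hard step here; the only point requiring a moment's care is part (d), where one must resist the temptation to check the identity only on left-invariant fields and instead verify it for a general $V$ — which, as above, reduces to the bracket identity $[X_1,X_2] = X_3$ precisely because the compatible connection differentiates frame coefficients and kills the frame itself. I would present parts (a)--(c) in two lines and devote the bulk of the written proof to the short coefficient computation establishing (d).
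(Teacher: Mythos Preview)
Your proposal is correct and is essentially the same approach as the paper's, which simply instructs the reader to verify the lemma directly from the definitions of $T$ and $R_\mathcal{H}$; you have merely written out those verifications explicitly, and your care in checking part (d) on a general $V$ rather than only on frame fields is appropriate.
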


\begin{proof} The reader can quickly verfiy this lemma using the definitions of the torsion tensor and the horizontal curvature operator. \end{proof}

\section{Compatible Connections on $(2,3)$ Sub-Riemannian Manifolds}

Let $M$ be an orientable $(2,3)$ sub-Riemannian manifold with sub-Riemannian structure $\left( \mathcal{H}, \left< \cdot, \cdot \right> \right)$.  We now discuss the conditions that an affine connection $\nabla$ must meet in order to be compatible with our sub-Riemannian structure.  Again, let $g$ be the unique co-metric associated to $\left( \mathcal{H}, \left< \cdot, \cdot \right> \right)$.  Then if $\{ X_1, X_2 \}$ is an oriented orthonormal frame for $\mathcal{H}$, then \begin{equation} g = X_1 \otimes  X_1+ X_2 \otimes X_2 \end{equation}

\begin{lemma} \label{compatcond} Let $\{X_1, X_2 \}$ be an ordered orthonormal frame for $\mathcal{H}$ on $U$ and let $X_3 = [X_1, X_2]$.  Let $\{\xi^1, \xi^2, \xi^3\}$ be the dual frame to $\{X_1, X_2, X_3 \}$  If $\nabla$ is compatible with $g$, then for all $V \in \mathcal{X}(U)$
\begin{enumerate}
\item[(a)] $\xi^1\left( \nabla_V X_1 \right) = 0 = \xi^2\left( \nabla_V X_2 \right)$
\item[(b)] $\xi^2\left( \nabla_V X_1 \right) = -\xi^1\left( \nabla_V X_2 \right)$
\item[(c)] $\xi^3\left( \nabla_V X_1 \right) = 0 = \xi^3\left( \nabla_V X_2 \right)$
\end{enumerate}
\end{lemma}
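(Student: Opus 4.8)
The plan is to unwind the compatibility condition $\nabla g = 0$ at a point and read off the three assertions by comparing coefficients in the frame $\{X_1,X_2,X_3\}$. Since $\nabla g = 0$ is equivalent to $\nabla_V g = 0$ for every $V \in \mathcal{X}(U)$, and $\nabla$ extends to the contravariant $2$-tensor $g = X_1 \otimes X_1 + X_2 \otimes X_2$ by the Leibniz rule, I would first compute
\begin{equation}
\nabla_V g = (\nabla_V X_1)\otimes X_1 + X_1 \otimes (\nabla_V X_1) + (\nabla_V X_2)\otimes X_2 + X_2 \otimes (\nabla_V X_2).
\end{equation}
Because $\{X_1, X_2, X_3\}$ is a frame for $TU$, I can expand $\nabla_V X_1 = \sum_{i=1}^{3} \xi^i(\nabla_V X_1)\,X_i$ and $\nabla_V X_2 = \sum_{i=1}^{3} \xi^i(\nabla_V X_2)\,X_i$. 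Substituting these into the displayed formula writes $\nabla_V g$ as a linear combination of the tensors $X_i \otimes X_j$, $1 \leq i,j \leq 3$, which form a basis for the fibre of $TU \otimes TU$.

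The lemma then follows by equating each coefficient to zero. The coefficient of $X_1 \otimes X_1$ is $2\,\xi^1(\nabla_V X_1)$ and that of $X_2 \otimes X_2$ is $2\,\xi^2(\nabla_V X_2)$, which gives part (a); the coefficient of $X_1 \otimes X_2$ (equivalently, by symmetry of $\nabla_V g$, of $X_2 \otimes X_1$) is $\xi^2(\nabla_V X_1) + \xi^1(\nabla_V X_2)$, which gives part (b); and the coefficients of $X_1 \otimes X_3$ and $X_2 \otimes X_3$ are $\xi^3(\nabla_V X_1)$ and $\xi^3(\nabla_V X_2)$ respectively, which gives part (c). The coefficient of $X_3 \otimes X_3$ vanishes identically, so it imposes no further restriction.

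I do not expect a genuine obstacle here: the statement is a pointwise, fibrewise linear-algebra identity, so no global or analytic input is required, and the extension of $\nabla$ to $g$ is the standard one. The only point demanding care is that $g$ is built contravariantly from the frame vector fields themselves rather than being a metric on $TM$, so the Leibniz expansion must be carried out in $TU \otimes TU$ and the components extracted against the basis $\{X_i \otimes X_j\}$ (not against a dual basis); once that bookkeeping is set up, the coefficient matching is immediate.
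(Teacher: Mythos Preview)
Your proposal is correct and is essentially identical to the paper's proof: the paper also applies the Leibniz rule to $g = X_1 \otimes X_1 + X_2 \otimes X_2$ and then evaluates $(\nabla_V g)(\xi^i,\xi^j)$ for the pairs $(1,1)$, $(2,2)$, $(1,2)$, $(1,3)$, $(2,3)$, which is exactly the same as reading off the coefficients of $X_i \otimes X_j$ in the basis expansion you describe. The only cosmetic difference is that the paper phrases the extraction via pairing with the dual coframe rather than as a coefficient comparison, but these are the same operation.
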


\begin{proof}
Applying $\nabla$ to $g$, we get \begin{equation} \label{nablag} 0 = \nabla g = \nabla X_1 \otimes X_1 + X_1 \otimes \nabla X_1 + \nabla X_2 \otimes X_2 + X_2 \otimes \nabla X_2 \end{equation}  Let $V \in \mathcal{X}(U)$.  For all $i, j \in \{1,2,3\}$, we get from equation (\ref{nablag}) that \begin{equation} 0 = (\nabla_V g )(\xi^i, \xi^j) = \xi^i (\nabla_V X_1) \xi^j (X_1) + \xi^i (X_1) \xi^j (\nabla_V X_1) + \xi^i (\nabla_V X_2) \xi^j (X_2) + \xi^i (X_2) \xi^j (\nabla_V X_2) \end{equation} In particular, taking $(i, j) = (1,1)$ gives us \begin{equation} 0 = \xi^1 (\nabla_V X_1) \cdot 1 + 1 \cdot \xi^1 (\nabla_V X_1) + \xi^1 (\nabla_V X_2) \cdot 0 + 0 \cdot \xi^1 (\nabla_V X_2) = 2 \xi^i (\nabla_V X_1) \end{equation} and taking $(i, j) = (2,2)$ gives us \begin{equation} 0 = \xi^2 (\nabla_V X_1) \cdot 0 + 0 \cdot \xi^2 (\nabla_V X_1) + \xi^2 (\nabla_V X_2) \cdot 1 + 1 \cdot \xi^2 (\nabla_V X_2) = 2 \xi^2 (\nabla_V X_2) \end{equation} so part (a) of the lemma is true.  Likewise, taking $(i, j) = (1, 3)$ and $(i, j) = (2,3)$ gives us, respectively, \begin{equation} 0 = \xi^1 (\nabla_V X_1) \cdot 0 + 1 \cdot \xi^3 (\nabla_V X_1) + \xi^1 (\nabla_V X_2) \cdot 0 + 0 \cdot \xi^3 (\nabla_V X_2) = \xi^3 (\nabla_V X_1) \end{equation} and \begin{equation} 0 = \xi^2 (\nabla_V X_1) \cdot 0 + 0 \cdot \xi^3 (\nabla_V X_1) + \xi^2 (\nabla_V X_2) \cdot 0 + 1 \cdot \xi^3 (\nabla_V X_2) = \xi^3 (\nabla_V X_2) \end{equation} proving part (c) of the lemma.  Finally, taking $(i, j) = (1, 2)$, we get that \begin{equation} 0 = \xi^1 (\nabla_V X_1) \cdot 0 + 1 \cdot \xi^2 (\nabla_V X_1) + \xi^1 (\nabla_V X_2) \cdot 1 + 0 \cdot \xi^2 (\nabla_V X_2) = \xi^2 (\nabla_V X_1) + \xi^1 (\nabla_V X_2) \end{equation} so part (b) of the lemma is true as well.
\end{proof}

\begin{corollary}  \label{expansioncor} There exist smooth functions $f_i : U \rightarrow \mathbb{R}$, $i \in \{1, 2, 3\}$ such that  \begin{equation} \nabla_{X_i} X_1 = f_i X_2 \qquad \text{and} \qquad \nabla_{X_i} X_2 = -f_i X_1 \end{equation} for all $i \in \{1, 2, 3\}$.
\end{corollary}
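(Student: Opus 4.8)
The plan is to read off the corollary directly from Lemma \ref{compatcond} by expanding the covariant derivatives in the frame $\{X_1, X_2, X_3\}$ and discarding the components that the compatibility conditions force to vanish. Concretely, for each fixed $i \in \{1,2,3\}$, the vector field $\nabla_{X_i} X_1$ on $U$ has the expansion
\begin{equation}
\nabla_{X_i} X_1 = \xi^1(\nabla_{X_i} X_1)\, X_1 + \xi^2(\nabla_{X_i} X_1)\, X_2 + \xi^3(\nabla_{X_i} X_1)\, X_3,
\end{equation}
and similarly for $\nabla_{X_i} X_2$.

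The next step is to apply Lemma \ref{compatcond} with $V = X_i$. Part (a) kills the $X_1$-component of $\nabla_{X_i} X_1$ and the $X_2$-component of $\nabla_{X_i} X_2$; part (c) kills both $X_3$-components. Hence $\nabla_{X_i} X_1 = \xi^2(\nabla_{X_i} X_1)\, X_2$ and $\nabla_{X_i} X_2 = \xi^1(\nabla_{X_i} X_2)\, X_1$. I then set $f_i \mathrel{\mathop:}= \xi^2(\nabla_{X_i} X_1)$, which is smooth on $U$ because it is the pairing of the smooth covector field $\xi^2$ with the smooth vector field $\nabla_{X_i} X_1$. Finally, part (b) of Lemma \ref{compatcond} (again with $V = X_i$) gives $\xi^1(\nabla_{X_i} X_2) = -\xi^2(\nabla_{X_i} X_1) = -f_i$, so $\nabla_{X_i} X_2 = -f_i X_1$, as claimed.

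There is essentially no obstacle here: the corollary is just a repackaging of Lemma \ref{compatcond} into the form that will be convenient in Section 5, and the only thing worth a sentence is the observation that the coefficients $f_i$ are genuinely smooth rather than merely pointwise-defined. I would keep the write-up to a few lines, doing the $\nabla_{X_i} X_1$ case in full and remarking that the $\nabla_{X_i} X_2$ case is identical up to the sign supplied by part (b).
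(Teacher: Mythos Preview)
Your proposal is correct and follows essentially the same route as the paper: define $f_i = \xi^2(\nabla_{X_i} X_1)$, expand $\nabla_{X_i} X_j$ in the frame via the dual coframe, and use Lemma~\ref{compatcond} to kill the unwanted components and supply the sign relation. The paper's write-up is only marginally terser and adds nothing you have not already said.
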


\begin{proof}
For $i \in \{1, 2, 3 \}$, set \begin{equation} f_i = \xi^2\left( \nabla_{X_i} X_1 \right) \end{equation}  The dual frame $\{ \xi^1, \xi^2, \xi^3 \}$ is orthonormal under the co-metric $g$, thus for all $i, j \in \{1, 2, 3\}$, we have that \begin{equation} \label{nablaexpansion} \nabla_{X_i} X_j = \left( \xi^1 (\nabla_{X_i} X_j) \right) X_1 + \left( \xi^2 (\nabla_{X_i} X_j) \right) X_2 + \left( \xi^3 (\nabla_{X_i} X_j) \right) X_3 \end{equation}  Applying Lemma \ref{compatcond} to equation (\ref{nablaexpansion}) for $i \in \{1, 2, 3\}$ and $j \in \{1, 2 \}$ proves the corollary.
\end{proof}

For compatible connections $\nabla$, we have the following lemma.

\begin{lemma} \label{torsioncond}
Let $\{X_1, X_2, X_3\}$ and $\{ \xi^1, \xi^2, \xi^3 \}$ be as above, and let $\nabla$ be an affine connection compatible with the co-metric $g$.  Then
\begin{enumerate}
\item[(a)] $\xi^1(T(X_1, X_2)) = \xi^1\left( \nabla_{X_1} X_2 \right) = -\xi^2\left( \nabla_{X_1} X_1 \right)$
\item[(b)] $\xi^2(T(X_1, X_2)) = \xi^1\left( \nabla_{X_2} X_2 \right) = -\xi^2\left( \nabla_{X_2} X_1 \right)$
\item[(c)] $\xi^3(T(X_1, X_2)) = -1$
\end{enumerate}
In particular, if $\nabla$ is compatible with $g$, then $\nabla$ cannot be torsion-free.
\end{lemma}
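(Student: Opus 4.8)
The plan is to expand the torsion tensor on the frame vectors $X_1, X_2$ directly from its defining formula and then read off the components using Corollary \ref{expansioncor} together with the duality relations $\xi^i(X_j) = \delta^i_j$. Since $T(X_1, X_2) = \nabla_{X_1} X_2 - \nabla_{X_2} X_1 - [X_1, X_2]$ and $[X_1, X_2] = X_3$ by the standing convention, I would apply $\xi^k$ to obtain, for each $k \in \{1,2,3\}$,
\begin{equation}
\xi^k(T(X_1, X_2)) = \xi^k\left( \nabla_{X_1} X_2 \right) - \xi^k\left( \nabla_{X_2} X_1 \right) - \xi^k(X_3),
\end{equation}
and then evaluate each of the three terms separately.

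For the covariant-derivative terms, Corollary \ref{expansioncor} (with $i=1$ and $i=2$) gives $\nabla_{X_1} X_2 = -f_1 X_1$ and $\nabla_{X_2} X_1 = f_2 X_2$, so $\nabla_{X_1}X_2$ is seen only by $\xi^1$, $\nabla_{X_2}X_1$ is seen only by $\xi^2$, and both are annihilated by $\xi^3$. For the last term, $\xi^k(X_3) = \delta^k_3$ by duality. Assembling these yields $\xi^1(T(X_1,X_2)) = -f_1$, $\xi^2(T(X_1,X_2)) = -f_2$, and $\xi^3(T(X_1,X_2)) = -1$, which is part (c). To recover the precise three-way equalities asserted in (a) and (b), I would also observe that Corollary \ref{expansioncor} gives $\xi^1(\nabla_{X_1} X_2) = -f_1 = -\xi^2(\nabla_{X_1} X_1)$ and $\xi^1(\nabla_{X_2} X_2) = -f_2 = -\xi^2(\nabla_{X_2} X_1)$; these are exactly the quantities appearing on the right-hand sides of (a) and (b), so both lines follow immediately.

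The final assertion is then an immediate consequence of part (c): since $\xi^3(T(X_1, X_2)) = -1 \neq 0$, the vector field $T(X_1, X_2)$ is nonzero on $U$, so $T$ is not the zero tensor and $\nabla$ cannot be torsion-free.

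There is essentially no obstacle here: the argument is a direct computation once Corollary \ref{expansioncor} is in hand. The only points requiring a little care are the index bookkeeping in that corollary — keeping straight which index is the direction of differentiation and which is the differentiated vector — and the (reassuring) observation that the horizontal curvature operator plays no role in this lemma, so neither Lemma \ref{torsioncurvature} nor the change-of-frame formula (\ref{changeofframe}) is needed.
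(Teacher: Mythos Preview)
Your proof is correct and follows essentially the same approach as the paper: both compute $T(X_1,X_2)$ from its definition using Corollary~\ref{expansioncor} to obtain $T(X_1,X_2) = -f_1 X_1 - f_2 X_2 - X_3$, and then read off the components via the dual frame. The only cosmetic difference is that the paper writes out the full vector first and then identifies components, whereas you apply $\xi^k$ termwise; the content is identical.
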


\begin{proof}
Computing $T(X_1, X_2)$ and applying Corollary $\ref{expansioncor}$, we get \begin{eqnarray} T(X_1, X_2) & = & \nabla_{X_1} X_2 - \nabla_{X_2} X_1 - [X_1, X_2] \\ & = & -f_1 X_1 - f_2 X_2 - X_3 \end{eqnarray} which implies parts (a)--(c) of the lemma.
\end{proof}

We now begin investigating the horizontal curvature operator of a compatible connection.

\begin{lemma} \label{curvaturecond}
Let $\{X_1, X_2, X_3\}$ and $\{ \xi^1, \xi^2, \xi^3 \}$ be as above, and let $\nabla$ be an affine connection compatible with the co-metric $g$.  Then
\begin{enumerate}
\item[(a)] $\xi^1(R_\mathcal{H} X_1) = 0 = \xi^2(R_\mathcal{H} X_2)$
\item[(b)] $\xi^2(R_\mathcal{H} X_1) = -\xi^1(R_\mathcal{H} X_2)$
\item[(c)] $\xi^3(R_\mathcal{H} X_1) = \xi^3(R_\mathcal{H} X_2)$
\end{enumerate}
\end{lemma}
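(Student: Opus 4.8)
The plan is to mimic the argument for Lemma \ref{compatcond}, but applied to the horizontal curvature operator $R_\mathcal{H}$ instead of the full $\nabla$. The key observation is that $R_\mathcal{H} = R(X_1, X_2)$ acts on vector fields as a derivation-like operator in the sense that, since $\nabla g = 0$, the curvature operator $R(X_1,X_2)$ annihilates $g$ as well: this follows from the general identity that $R(V_1,V_2)$ acts on tensors as a derivation, so $R(X_1,X_2) g = 0$. First I would record this fact by differentiating the expression $g = X_1 \otimes X_1 + X_2 \otimes X_2$: writing out $R_\mathcal{H}(X_1 \otimes X_1 + X_2 \otimes X_2) = 0$ and using the derivation property gives
\begin{equation}
(R_\mathcal{H} X_1) \otimes X_1 + X_1 \otimes (R_\mathcal{H} X_1) + (R_\mathcal{H} X_2) \otimes X_2 + X_2 \otimes (R_\mathcal{H} X_2) = 0.
\end{equation}

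Next I would evaluate this identity of $(2,0)$-tensors on pairs $(\xi^i, \xi^j)$ exactly as in the proof of Lemma \ref{compatcond}. Taking $(i,j) = (1,1)$ yields $2\,\xi^1(R_\mathcal{H} X_1) = 0$ and $(i,j) = (2,2)$ yields $2\,\xi^2(R_\mathcal{H} X_2) = 0$, giving part (a). Taking $(i,j) = (1,2)$ gives $\xi^2(R_\mathcal{H} X_1) + \xi^1(R_\mathcal{H} X_2) = 0$, which is part (b). The only part that is genuinely different is (c): taking $(i,j)=(1,3)$ gives $\xi^3(R_\mathcal{H} X_1) = 0$ and $(i,j)=(2,3)$ gives $\xi^3(R_\mathcal{H} X_2) = 0$, which is actually \emph{stronger} than the asserted equality $\xi^3(R_\mathcal{H} X_1) = \xi^3(R_\mathcal{H} X_2)$ — so (c) follows a fortiori. (Alternatively, since $\{X_1,X_2\}$ is only required to be an orthonormal frame for $\mathcal{H}$, one can argue more carefully using only the components $\xi^1,\xi^2$ and the fact that $g$ has no $\xi^3$ component; either route works.)

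The main obstacle is justifying the derivation property $R(X_1,X_2)(g) = 0$ cleanly in this setting, since $g$ is a contravariant tensor built from vector fields rather than a covariant metric. I would handle this by noting that an affine connection $\nabla$ with $\nabla g = 0$ extends to act on all tensor powers, that curvature operators $R(V_1,V_2)$ are always tensorial derivations commuting with contractions, and hence $R(V_1,V_2) g = 0$ whenever $\nabla g = 0$; specializing to $(V_1,V_2)=(X_1,X_2)$ and noting $R_\mathcal{H}$ is frame-independent (previous lemma) gives the claim. Once that is in place, the remainder is the same bookkeeping computation with $\xi^i,\xi^j$ pairs used in Lemma \ref{compatcond}, and the three parts drop out immediately.
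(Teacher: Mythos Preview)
Your argument is correct but proceeds along a genuinely different line from the paper's. The paper does not invoke the general fact that $R(V_1,V_2)$ is a derivation on tensors annihilating parallel tensors; instead it uses Corollary~\ref{expansioncor} to write $\nabla_{X_i}X_1=f_iX_2$ and $\nabla_{X_i}X_2=-f_iX_1$ and then computes $R_{\mathcal H}X_1$ and $R_{\mathcal H}X_2$ directly, obtaining the explicit scalar expressions
\[
R_{\mathcal H}X_1=(X_1f_2-X_2f_1-f_3)\,X_2,\qquad R_{\mathcal H}X_2=-(X_1f_2-X_2f_1-f_3)\,X_1,
\]
from which (a)--(c) are read off (and, as you noticed, (c) actually holds with both sides equal to $0$). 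Your route is more conceptual and frame-independent: once $R(X_1,X_2)g=0$ is justified, the six $(\xi^i,\xi^j)$ evaluations are identical in form to those in Lemma~\ref{compatcond}, and the argument would carry over verbatim to higher-dimensional horizontal distributions. The paper's route, by contrast, yields the extra information that $R_{\mathcal H}$ restricted to $\mathcal H$ is governed by the single scalar $X_1f_2-X_2f_1-f_3$, a structural fact that sits naturally alongside the $f_i$-calculus used in the proof of Theorem~\ref{comptheorem}. Your identification of the only nontrivial step---that the curvature endomorphism extends to a derivation on contravariant tensors and kills $g$ because $\nabla g=0$---is accurate and standard; with that in hand the rest is bookkeeping, so the proposal is complete as written.
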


\begin{proof}
Computing $R_\mathcal{H} X_1$, we get
\begin{eqnarray} R_\mathcal{H} X_1 & = & \nabla_{X_1} \nabla_{X_2} X_1 - \nabla_{X_2} \nabla_{X_1} X_1 - \nabla_{X_3} X_1 \\
& = & \nabla_{X_1} (f_2 X_2) - \nabla_{X_2} (f_1 X_2) - f_3 X_2 \\
& = & (X_1 f_2) X_2 - f_2 f_1 X_1 - (X_2 f_1) X_2 + f_1 f_2 X_1 - f_3 X_2 \\
\label{RH1expansion} & = & (X_1 f_2 - X_2 f_1 - f_3) X_2 
\end{eqnarray} Likewise, computing $R_\mathcal{H} X_2$, we see that
\begin{eqnarray} R_\mathcal{H} X_2 & = & \nabla_{X_1} \nabla_{X_2} X_2 - \nabla_{X_2} \nabla_{X_1} X_2 - \nabla_{X_3} X_2 \\
& = & \nabla_{X_1} (-f_2 X_1) - \nabla_{X_2} (-f_1 X_1) + f_3 X_1 \\
& = & -(X_1 f_2) X_1 - f_2 f_1 X_2 + (X_2 f_1) X_1 + f_1 f_2 X_2 + f_3 X_1 \\
\label{RH2expansion} & = & -(X_1 f_2 - X_2 f_1 - f_3) X_1 
\end{eqnarray} Parts (a)--(c) then follow from equations (\ref{RH1expansion}) and (\ref{RH2expansion}).
\end{proof}

We now come to the main theorem of this section.

\begin{theorem} \label{comptheorem} Suppose $\nabla$ and $\bar{\nabla}$ are two affine connections compatible with the co-metric $g$.  Let $T$ and $R_\mathcal{H}$ be the torsion tensor and the horizontal curvature operator of $\nabla$, and let $\bar{T}$ and $\bar{R}_\mathcal{H}$ be the torsion tensor and the horizontal curvature operator of $\bar{\nabla}$.  Then $\nabla = \bar{\nabla}$ if and only if $T = \bar{T}$ and $R_\mathcal{H} = \bar{R}_\mathcal{H}$.
\end{theorem}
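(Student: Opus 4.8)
The plan is as follows. The forward implication is immediate, since a connection determines its own torsion and curvature; so I would only argue the converse: assuming $T = \bar{T}$ and $R_\mathcal{H} = \bar{R}_\mathcal{H}$, I want $\nabla = \bar{\nabla}$. An affine connection is a local operator determined by its action on any local frame, so it suffices to work on an open set $U$ equipped with an ordered oriented orthonormal frame $\{X_1, X_2\}$ for $\mathcal{H}$ together with $X_3 = [X_1, X_2]$ as in Section 2, and to check that $\nabla_{X_i} X_j = \bar{\nabla}_{X_i} X_j$ for every $i, j \in \{1,2,3\}$; equality on all such $U$ then gives global equality.

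First I would identify the action of both connections on the horizontal directions. By Corollary \ref{expansioncor} there are smooth functions $f_i$ and $\bar{f}_i$ on $U$ with $\nabla_{X_i} X_1 = f_i X_2$, $\nabla_{X_i} X_2 = -f_i X_1$ and $\bar{\nabla}_{X_i} X_1 = \bar{f}_i X_2$, $\bar{\nabla}_{X_i} X_2 = -\bar{f}_i X_1$. From the proof of Lemma \ref{torsioncond} one has $T(X_1, X_2) = -f_1 X_1 - f_2 X_2 - X_3$ and the same formula for $\bar{T}$ with $\bar{f}_1, \bar{f}_2$; comparing the $X_1$- and $X_2$-components of the equality $T = \bar{T}$ yields $f_1 = \bar{f}_1$ and $f_2 = \bar{f}_2$. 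From equation (\ref{RH1expansion}) in the proof of Lemma \ref{curvaturecond}, $R_\mathcal{H} X_1 = (X_1 f_2 - X_2 f_1 - f_3) X_2$; setting this equal to $\bar{R}_\mathcal{H} X_1$ and using the two equalities just obtained forces $f_3 = \bar{f}_3$. Consequently $\nabla_{X_i} X_j = \bar{\nabla}_{X_i} X_j$ for all $i \in \{1,2,3\}$ and $j \in \{1,2\}$.

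It then remains to treat $\nabla_{X_i} X_3$. For $i \in \{1,2\}$ I would solve the torsion identity $T(X_i, X_3) = \nabla_{X_i} X_3 - \nabla_{X_3} X_i - [X_i, X_3]$ for $\nabla_{X_i} X_3$; its right-hand side involves only $T$ (which agrees), the covariant derivative $\nabla_{X_3} X_i$ with $i \in \{1,2\}$ (already shown to agree), and a Lie bracket, so $\nabla_{X_i} X_3 = \bar{\nabla}_{X_i} X_3$. For the last component I would use the definition $R_\mathcal{H} X_3 = \nabla_{X_1} \nabla_{X_2} X_3 - \nabla_{X_2} \nabla_{X_1} X_3 - \nabla_{X_3} X_3$: expanding $\nabla_{X_1} X_3$ and $\nabla_{X_2} X_3$ in the frame $\{X_1, X_2, X_3\}$ — legitimate since those vector fields, hence their frame coefficients, have already been shown to coincide for $\nabla$ and $\bar{\nabla}$ — and differentiating once more, every covariant derivative that appears is one already shown to agree. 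Hence $R_\mathcal{H} X_3 + \nabla_{X_3} X_3 = \bar{R}_\mathcal{H} X_3 + \bar{\nabla}_{X_3} X_3$, and since $R_\mathcal{H} X_3 = \bar{R}_\mathcal{H} X_3$ we obtain $\nabla_{X_3} X_3 = \bar{\nabla}_{X_3} X_3$, completing all nine cases.

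The step I expect to be the main obstacle — or at least the one demanding care — is this last one. Neither the torsion tensor (since $T(X_3, X_3) = 0$ tautologically) nor the compatibility condition $\nabla g = 0$ (which by Lemma \ref{compatcond} constrains only $\nabla X_1$ and $\nabla X_2$) gives any direct control over $\nabla_{X_3} X_3$, so this component can only be recovered from the curvature data. The key point to keep in mind is that $R_\mathcal{H}$ is $C^\infty(U)$-linear in its argument and is therefore determined by its values on the full frame $\{X_1, X_2, X_3\}$ rather than just on $\{X_1, X_2\}$ as computed in Lemma \ref{curvaturecond}; it is precisely $R_\mathcal{H} X_3$ that encodes $\nabla_{X_3} X_3$. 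Everything else reduces to comparing coefficients against the frame $\{X_1, X_2, X_3\}$.
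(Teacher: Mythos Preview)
Your proposal is correct and follows essentially the same route as the paper: extract $f_1,f_2$ from $T(X_1,X_2)$, then $f_3$ from $R_{\mathcal{H}}X_1$, use the torsion identity to recover $\nabla_{X_i}X_3$ for $i=1,2$, and finally solve the defining formula of $R_{\mathcal{H}}X_3$ for $\nabla_{X_3}X_3$. The only cosmetic difference is that the paper phrases the intermediate steps in terms of the components $\xi^k(\nabla_{X_i}X_j)$ rather than the coefficients $f_i$, and packages the penultimate step as the observation that $\bar{\nabla}_{X_1}V=\nabla_{X_1}V$ and $\bar{\nabla}_{X_2}V=\nabla_{X_2}V$ for all $V$ before invoking $R_{\mathcal{H}}X_3$.
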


\begin{proof} Let $\{X_1, X_2, X_3\}$ and $\{ \xi^1, \xi^2, \xi^3 \}$ be as above.  Suppose $T = \bar{T}$ and $R_\mathcal{H} = \bar{R}_\mathcal{H}$.  Then by Lemma \ref{compatcond} and Lemma \ref{torsioncond}, we have that
\begin{eqnarray} && \bar{\nabla}_{X_1} X_1 = -\xi^1(\bar{T}(X_1, X_2)) X_2 = -\xi^1(T(X_1, X_2)) X_2 = \nabla_{X_1} X_1 \\
&& \bar{\nabla}_{X_1} X_2 = \xi^1(\bar{T}(X_1, X_2)) X_1 = \xi^1(T(X_1, X_2)) X_1 = \nabla_{X_1} X_2 \\
&& \bar{\nabla}_{X_2} X_1 = -\xi^2(\bar{T}(X_1, X_2)) X_2 = -\xi^2(T(X_1, X_2)) X_2 = \nabla_{X_2} X_1 \\
&& \bar{\nabla}_{X_2} X_2 = \xi^2(\bar{T}(X_1, X_2)) X_1 = \xi^2(T(X_1, X_2)) X_1 = \nabla_{X_2} X_2
\end{eqnarray} By Corollary \ref{expansioncor} and Lemma \ref{curvaturecond}, we also see that 
\begin{eqnarray} \bar{\nabla}_{X_3} X_1 & = & \bar{\nabla}_{X_1} \bar{\nabla}_{X_2} X_1 - \bar{\nabla}_{X_2} \bar{\nabla}_{X_1} X_1 - \bar{R}_\mathcal{H}X_1 \\
& = & [- X_1 [\xi^1 (\bar{T}(X_1, X_2))] + X_2 [\xi^2 (\bar{T}(X_1, X_2))]] X_2 - \bar{R}_\mathcal{H} X_1 \\
& = & [- X_1 [\xi^1 (T(X_1, X_2))] + X_2 [\xi^2 (T(X_1, X_2))]] X_2 - R_\mathcal{H} X_1 \\
& = & \nabla_{X_3} X_1 \end{eqnarray} and similarly
\begin{eqnarray} \bar{\nabla}_{X_3} X_2 & = & \bar{\nabla}_{X_1} \bar{\nabla}_{X_2} X_2 - \bar{\nabla}_{X_2} \bar{\nabla}_{X_1} X_2 - \bar{R}_\mathcal{H}X_2 \\
& = & [X_1 [\xi^1 (\bar{T}(X_1, X_2))] - X_2 [\xi^2 (\bar{T}(X_1, X_2))]] X_1 - \bar{R}_\mathcal{H} X_2 \\
& = & [X_1 [\xi^1 (T(X_1, X_2))] - X_2 [\xi^2 (T(X_1, X_2))]] X_1 - R_\mathcal{H} X_2 \\
& = & \nabla_{X_3} X_2 \end{eqnarray} We then apply the definition of the torsion tensor to get
\begin{eqnarray} \bar{\nabla}_{X_1} X_3 & = & \bar{T}(X_1, X_3) + \bar{\nabla}_{X_3} X_1 + [X_1, X_3] \\
& = & T(X_1, X_3) + \nabla_{X_3} X_1 + [X_1, X_3] \\
& = & \nabla_{X_1} X_3 \end{eqnarray} and, in the same way,
\begin{eqnarray} \bar{\nabla}_{X_2} X_3 & = & \bar{T}(X_2, X_3) + \bar{\nabla}_{X_3} X_2 + [X_2, X_3] \\
& = & T(X_2, X_3) + \nabla_{X_3} X_2 + [X_2, X_3] \\
& = & \nabla_{X_2} X_3 \end{eqnarray}  Thus we have that, for any $V \in \mathcal{X}(U)$, \begin{equation} \label{nablabarequalsnabla} \bar{\nabla}_{X_1} V = \nabla_{X_1} V \qquad \text{and} \qquad \bar{\nabla}_{X_2} V = \nabla_{X_2} V \end{equation} Finally, we use equation (\ref{nablabarequalsnabla}) and the formula for the horizontal curvature operator to get
\begin{eqnarray} \bar{\nabla}_{X_3} X_3 & = & \bar{\nabla}_{X_1} \bar{\nabla}_{X_2} X_3 - \bar{\nabla}_{X_2} \bar{\nabla}_{X_1} X_3 - \bar{R}_\mathcal{H}X_3 \\
& = & \bar{\nabla}_{X_1} \nabla_{X_2} X_3 - \bar{\nabla}_{X_2} \nabla_{X_1} X_3 - R_\mathcal{H}X_3 \\
& = & \nabla_{X_1} \nabla_{X_2} X_3 - \nabla_{X_2} \nabla_{X_1} X_3 - R_\mathcal{H}X_3 \\
& = & \nabla_{X_3} X_3 \end{eqnarray}  Thus, on any coordinate patch $U$, and hence on all of $M$, $\bar{\nabla} = \nabla$.

If $\nabla = \bar{\nabla}$, then $T = \bar{T}$ and $R_\mathcal{H} = \bar{R}_\mathcal{H}$ as a trivial result of the definitions of the torsion tensor and the horizontal curvature operator. \end{proof}

\section{Flattening the Sub-Riemannian Structure and Carnot Frames}

Theorem \ref{comptheorem} tells us that affine connections that are compatible with the sub-Riemannian structure of $M$ are determined by their torsion tensor and their horizontal curvature operator.  We now begin the process of finding natural values for the torsion tensor and the horizontal curvature operator.  We begin with a definition.

\begin{definition} Let $\hat{U}$ be an open neighborhood of $p$, and let $\{\hat{X}_1, \hat{X}_2, \hat{X}_3 \}$ be a frame for $T\hat{U}$ such that, for all $q \in \hat{U}$, \begin{equation} [\hat{X}_1, \hat{X}_2]_q = (\hat{X}_3)_q \qquad  [\hat{X}_3, \hat{X}_1]_q = 0 \qquad [\hat{X}_2, \hat{X}_3]_q = 0 \end{equation} We say that \begin{equation} \hat{g} = \hat{X}_1 \otimes \hat{X}_1 + \hat{X}_2 \otimes \hat{X}_2 \end{equation} is a \emph{flattening} of $g$ at $p$ with frame $\{\hat{X}_1, \hat{X}_2, \hat{X}_3 \}$ if there exists an ordered orthonormal frame $\{X_1, X_2 \}$ for $\mathcal{H}$ on $\hat{U}$ such that for all first, second, and third weighted order multi-indices $\alpha$, \begin{equation} \hat{X}^\alpha_p = X^\alpha_p \end{equation} as smooth differential operators at p. \end{definition}

Thus a flattening $\hat{g}$ at $p$ with frame $\{\hat{X}_1, \hat{X}_2, \hat{X}_3 \}$ is isomorphic to an open neighborhood of the Heisenberg group $\mathbb{H}^1$, and it well approximates the horizontal sub-bundle $\mathcal{H}$ of $M$ in the neighborhood of $p$.  We measure this approximation by studying the first, second, third weighted order differential operators at $p$ generated by both the frame $\{X_1, X_2, X_3\}$ and the frame $\{\hat{X}_1, \hat{X}_2, \hat{X}_3 \}$.  In general, we cannot find a frame $\{\hat{X}_1, \hat{X}_2, \hat{X}_3 \}$ that approximates $\{X_1, X_2, X_3\}$ at $p$ to fourth or higher weighted order.

To understand the structure of a flattening (and, in particular, the direction of $\hat{X}_3$ at $p$), we now consider a different way to recover the structure of the Heisenberg group into our sub-Riemannian structure.  Again, suppose $\{X_1, X_2 \}$ is an ordered orthonormal frame for $\mathcal{H}$ on $U$ such that, defining $X_3 = [X_1,X_2]$, $\{X_1, X_2, X_3 \}$ is an oriented frame for $TU$ with orientation matching that of $M$.  Let $\{ \xi_1, \xi_2, \xi_3 \}$ be the dual frame to $\{X_1, X_2, X_3 \}$.  As noted before, we can write any other such frame $\{X_1',X_2'\}$ as \begin{equation} X_1' = \cos \theta \, X_1 + \sin \theta \, X_2 \qquad X_2' = -\sin \theta \, X_1 + \cos \theta \, X_2 \end{equation} where $\theta : U \rightarrow \mathbb{R}$ is smooth.  It then follows from basic calculations that \begin{equation} \label{x3value} X_3' = [X_1', X_2'] = -(X_1 \theta) \, X_1 -(X_2 \theta) \, X_2 + X_3 \end{equation} and \begin{equation} [X_1', X_3'] = \cos \theta \, V_1 + \sin \theta \, V_2 \qquad [X_2', X_3' ] = -\sin \theta \, V_1 + \cos \theta \, V_2 \end{equation} where \begin{eqnarray} \label{V1} V_1 & = & [X_1, X_3] - (X_2 \theta) \, X_3 - (X_1 X_1 \theta) \, X_1 + \left( (X_1 \theta)^2 + (X_2 \theta)^2 - 2 X_1 X_2 \theta + X_2 X_1 \theta \right) \, X_2 \\ \label{V2} V_2 & = & [X_2, X_3] + (X_1 \theta) \, X_3 - (X_2 X_2 \theta) \, X_2 - \left( (X_1 \theta)^2 + (X_2 \theta)^2 - X_1 X_2\theta + 2 X_2 X_1 \theta \right) \, X_1 \end{eqnarray}  With the Heisenberg bracket relations in mind, we make the following definition:

\begin{definition} A \emph{Carnot frame} for $\mathcal{H}$ at $p \in U$ is an ordered orthonormal frame $\{X_1, X_2\}$ for $\mathcal{H}$ such that \begin{equation} [X_1, X_3]_p = 0 \quad \text{and} \quad [X_2, X_3]_p  = 0 \end{equation} \end{definition}

\begin{theorem} \label{carnotexistence} Let $M$, $\left(\mathcal{H}, \left< \cdot, \cdot \right> \right)$, $U$ be as above, and let $p \in U$.  There exists a Carnot frame for $\mathcal{H}$ at $p$. \end{theorem}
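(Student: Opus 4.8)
The plan is to fix an arbitrary oriented orthonormal frame $\{X_1,X_2\}$ for $\mathcal{H}$ on $U$, put $X_3=[X_1,X_2]$, and look for a smooth function $\theta:U\to\mathbb{R}$ so that the rotated frame $\{X_1',X_2'\}$ of equation (\ref{changeofframe}) is a Carnot frame at $p$. Since the rotation matrix relating the two frames is invertible at every point, the formulas $[X_1',X_3']=\cos\theta\,V_1+\sin\theta\,V_2$ and $[X_2',X_3']=-\sin\theta\,V_1+\cos\theta\,V_2$ stated just above the definition of a Carnot frame show that $\{X_1',X_2'\}$ is Carnot at $p$ if and only if $V_1(p)=0$ and $V_2(p)=0$, with $V_1,V_2$ as in (\ref{V1})--(\ref{V2}). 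The problem therefore reduces to producing a $\theta$ for which these two vectors vanish at the single point $p$.

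To do this I would expand $V_1(p)$ and $V_2(p)$ in the basis $\{X_1,X_2,X_3\}|_p$. Writing $[X_1,X_3]=a_1X_1+a_2X_2+a_3X_3$ and $[X_2,X_3]=b_1X_1+b_2X_2+b_3X_3$ with $a_i,b_i\in C^\infty(U)$, equations (\ref{V1})--(\ref{V2}) show that the six scalar equations ``$V_1(p)=0$, $V_2(p)=0$'' constrain only the $2$-jet of $\theta$ at $p$: the $X_3$-components of $V_1$ and $V_2$ prescribe $(X_2\theta)(p)$ and $(X_1\theta)(p)$; the $X_1$-component of $V_1$ and the $X_2$-component of $V_2$ prescribe $(X_1X_1\theta)(p)$ and $(X_2X_2\theta)(p)$; and the remaining two components, after substituting the now-known values of $(X_1\theta)(p)$ and $(X_2\theta)(p)$, form a nonsingular linear system in the two unknowns $(X_1X_2\theta)(p)$ and $(X_2X_1\theta)(p)$. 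These conditions pin the relevant part of the $2$-jet of $\theta$ to definite values, and they are mutually consistent: the value forced on $(X_3\theta)(p)$ equals $(X_1X_2\theta)(p)-(X_2X_1\theta)(p)$, which is automatic from the identity $X_1X_2-X_2X_1=X_3$ (the Jacobi identity plays no role).

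It then remains to exhibit a smooth function realizing this prescribed $2$-jet. I would take local coordinates $(x^1,x^2,x^3)$ centered at $p$ with $X_i|_p=\partial_{x^i}|_p$ and let $\theta$ be a quadratic polynomial $\theta=\sum_i\lambda_ix^i+\tfrac12\sum_{i,j}\mu_{ij}x^ix^j$ with $\mu_{ij}=\mu_{ji}$. Then $(X_i\theta)(p)=\lambda_i$ and $(X_iX_j\theta)(p)=\mu_{ij}+\ell_{ij}(\lambda_1,\lambda_2,\lambda_3)$ for explicit linear functions $\ell_{ij}$ arising from differentiating the frame's transition coefficients; choosing the $\lambda_i$ to hit the required first-order data and then the relevant $\mu_{ij}$ to hit the required second-order data gives a $\theta$ with $V_1(p)=V_2(p)=0$. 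Since the rotation preserves orthonormality and orientation, $\{X_1',X_2'\}$ is again an oriented orthonormal frame for $\mathcal{H}$, and it is Carnot at $p$, which would complete the proof.

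The only substantive step is the bookkeeping in the second paragraph: reading off from (\ref{V1})--(\ref{V2}) that $V_1(p)=V_2(p)=0$ depends only on the $2$-jet of $\theta$, checking that the resulting system is consistent (this is exactly where the commutator relation $X_1X_2-X_2X_1=X_3$ is used), and verifying that the residual $2\times2$ system is nonsingular. After that, the realization step is routine.
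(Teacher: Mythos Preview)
Your argument is correct and follows essentially the same route as the paper's proof: rotate an initial oriented orthonormal frame by a function $\theta$, reduce the Carnot condition at $p$ to $V_1(p)=V_2(p)=0$, and observe that these six scalar equations determine only the horizontal $2$-jet of $\theta$ at $p$, which can then be realized by a smooth function. The paper simply writes down the explicit target values for $(X_1\theta)(p),(X_2\theta)(p),(X_iX_j\theta)(p)$ (your $2\times2$ system has coefficient matrix $\begin{pmatrix}-2&1\\1&-2\end{pmatrix}$, determinant $3$, matching the denominators $3$ in the paper's formulas) and asserts that such a $\theta$ exists; you supply the extra sentence explaining why the prescription is consistent and how to realize it with a quadratic polynomial in adapted coordinates, but the underlying idea is identical.
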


\begin{proof} Let $\{X_1, X_2, X_3 \}$, $\{ \xi_1, \xi_2, \xi_3 \}$, and $\{X_1',X_2', X_3' \}$ be defined as above.  We can choose $\theta$ such that \begin{eqnarray} \label{thetaderivative1} (X_1)_p \theta & = & -(\xi_3)_p ([X_2, X_3]) \\  (X_2)_p \theta & = & (\xi_3)_p ([X_1, X_3]) \\ (X_1 X_1)_p \theta & = & (\xi_1)_p ([X_1, X_3]) \\ (X_1 X_2)_p \theta & = & \frac {\left( (\xi_3)_p ([X_1, X_3]) \right)^2 + \left( (\xi_3)_p ([X_2, X_3])  \right)^2 + (\xi_1)_p ([X_2, X_3]) + 2 (\xi_2)_p ([X_1, X_3])} {3}  \\  (X_2 X_1)_p \theta & = & \frac {- \left( (\xi_3)_p ([X_1, X_3]) \right)^2 - \left( (\xi_3)_p ([X_2, X_3])  \right)^2 + (\xi_2)_p ([X_1, X_3]) + 2 (\xi_1)_p ([X_2, X_3])} {3} \\ \label{thetaderivative6} (X_2 X_2)_p \theta & = & (\xi_2)_p ([X_2, X_3])  \end{eqnarray}  Substituting these values into equations (\ref{V1}) and (\ref{V2}) gives us $(V_1)_p = 0$ and $(V_2)_p = 0$, which in turn tells us that \begin{equation} [X_1', X_3']_p = 0 \quad \text{and} \quad [X_2', X_3']_P = 0 \end{equation}  Thus $\{X_1',X_2' \}$ is a Carnot frame for $\mathcal{H}$ at $p$.
\end{proof}

An important implication of $\{X_1, X_2 \}$ being a Carnot frame is that the direction of $X_3 = [X_1, X_2]$ is fixed at $p$, as shown in the following corollary.

\begin{corollary} \label{carnottorsion} Suppose $\{X_1, X_2 \}$ and $\{X_1',X_2' \}$ are as above and both are Carnot frames for $\mathcal{H}$ at $p$.  Then all first and second order horizontal derivatives of $\theta$ at $p$ equal $0$, and $(X_3)_p = (X_3')_p$. \end{corollary}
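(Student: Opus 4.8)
The plan is to reverse-engineer the computation from the proof of Theorem \ref{carnotexistence}. Let $\{X_1, X_2, X_3\}$, $\{\xi_1, \xi_2, \xi_3\}$, $\theta$, $\{X_1', X_2', X_3'\}$, $V_1$, $V_2$ all be as set up above, so that $\{X_1', X_2'\}$ is the rotation of $\{X_1, X_2\}$ through $\theta$ and $X_3' = [X_1', X_2']$. First I would use the hypothesis that $\{X_1', X_2'\}$ is a Carnot frame at $p$: this says $[X_1', X_3']_p = 0$ and $[X_2', X_3']_p = 0$. Since $[X_1', X_3'] = \cos\theta\, V_1 + \sin\theta\, V_2$ and $[X_2', X_3'] = -\sin\theta\, V_1 + \cos\theta\, V_2$, and the rotation matrix relating these to the pair $(V_1, V_2)$ is invertible, it follows that $(V_1)_p = 0$ and $(V_2)_p = 0$.

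Next I would exploit the hypothesis that $\{X_1, X_2\}$ is also a Carnot frame at $p$, i.e. $[X_1, X_3]_p = 0$ and $[X_2, X_3]_p = 0$. Substituting these into the formulas (\ref{V1}) and (\ref{V2}) for $V_1$ and $V_2$ and evaluating at $p$, the equations $(V_1)_p = 0$ and $(V_2)_p = 0$ become two vanishing linear combinations of $\{(X_1)_p, (X_2)_p, (X_3)_p\}$. As this is a basis of $T_p M$, every coefficient must vanish, giving six scalar equations. The coefficients of $X_3$ immediately yield $(X_2\theta)_p = 0$ (from $V_1$) and $(X_1\theta)_p = 0$ (from $V_2$); the $X_1$-coefficient of $V_1$ gives $(X_1 X_1\theta)_p = 0$ and the $X_2$-coefficient of $V_2$ gives $(X_2 X_2\theta)_p = 0$.

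Feeding $(X_1\theta)_p = (X_2\theta)_p = 0$ back into the two remaining scalar equations — the $X_2$-coefficient of $V_1$ and the $X_1$-coefficient of $V_2$ — collapses them to the homogeneous system
\begin{equation}
-2(X_1 X_2\theta)_p + (X_2 X_1\theta)_p = 0, \qquad -(X_1 X_2\theta)_p + 2(X_2 X_1\theta)_p = 0,
\end{equation}
whose coefficient matrix has determinant $-3 \neq 0$, forcing $(X_1 X_2\theta)_p = (X_2 X_1\theta)_p = 0$. Thus all first and second order horizontal derivatives of $\theta$ vanish at $p$. Finally, evaluating equation (\ref{x3value}), $X_3' = -(X_1\theta)X_1 - (X_2\theta)X_2 + X_3$, at $p$ gives $(X_3')_p = (X_3)_p$.

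The argument is almost entirely mechanical, and the only real care needed is in the bookkeeping: verifying that of the six scalar conditions produced, four pin down $(X_1\theta)_p$, $(X_2\theta)_p$, $(X_1 X_1\theta)_p$, $(X_2 X_2\theta)_p$ directly, while the remaining two assemble into a $2 \times 2$ system with nonzero determinant. The one spot where the proof could fail is if the sign conventions in (\ref{V1})--(\ref{V2}) made that last determinant vanish, so I would double-check those coefficients against (\ref{x3value}) before finalizing.
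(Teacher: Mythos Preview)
Your proposal is correct and follows essentially the same route as the paper. The paper's proof is extremely terse: it simply notes that $[X_1,X_3]_p = [X_2,X_3]_p = 0$ and then asserts that the corollary ``follows from equations (\ref{thetaderivative1}) through (\ref{thetaderivative6}),'' implicitly treating those six equations from Theorem~\ref{carnotexistence} as necessary (not merely sufficient) for $\{X_1',X_2'\}$ to be Carnot at $p$, and observing that their right-hand sides vanish. Your argument makes this necessity explicit by unwinding $(V_1)_p = (V_2)_p = 0$ component by component and checking that the resulting $2\times 2$ system for the mixed second derivatives has nonzero determinant; this is exactly the computation the paper is compressing.
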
 

\begin{proof} Since $\{X_1, X_2 \}$ is a Carnot frame for $\mathcal{H}$ at $p$, $[X_1, X_3]_p = 0$ and $[X_2, X_3]_p = 0$.  The corollary then follows from equations (\ref{thetaderivative1}) through (\ref{thetaderivative6}).\end{proof} 

The significance of having a Carnot frame for $\mathcal{H}$ at $p$ becomes apparent with the following theorem.  We need the next two lemmata to prove this theorem.

\begin{lemma} \label{2ordlem} Let $\{X_1, X_2\}$ be an oriented orthonormal frame for $\mathbb{H}$ on $U$, and let $X_3 = [X_1, X_2]$.  There exist coordinate functions $(x^1, x^2, x^3)$ on an open neighborhood $U' \subseteq U$ of $p$ such that
\begin{enumerate}
\item[(a)] $x^i (p) = 0$ for all $i \in {1,2,3}$;
\item[(b)] $(X_i  x^j)(p) = \delta^i_j$ for all $i, j \in {1,2,3}$;
\item[(c)] $(X_i X_j x^k)(p) = 0$ for all $i, j, k \in {1, 2, 3}$ except $(i, j, k) \in \{(1, 2, 3), (2, 1, 3)\}$; and
\item[(d)] $(X_1 X_2 x^3)(p) = \frac {1} {2}$ and $(X_2 X_1 x^3)(p) = -\frac {1} {2}$
\end{enumerate}
\end{lemma}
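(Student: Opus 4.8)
The plan is to obtain $(x^1,x^2,x^3)$ by correcting an arbitrary chart through second order. First I would fix any coordinate system $(y^1,y^2,y^3)$ defined near $p$ with $y^i(p)=0$. Since $\{(X_1)_p,(X_2)_p,(X_3)_p\}$ is a basis of $T_pM$, the matrix $\big((X_iy^j)(p)\big)$ is invertible, so after replacing the $y^j$ by a suitable constant-coefficient linear combination we may also assume $(X_iy^j)(p)=\delta_i^j$. This secures (a) and (b), and both properties survive every later correction because those corrections will be of order $\geq 2$ in $y$.

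Next I would install the prescribed second derivatives by a purely quadratic change of variables. Write $c^k_{ij}:=(X_iX_jy^k)(p)$ and let $\tau^k_{ij}$ denote the target values ($\tau^3_{12}=\tfrac{1}{2}$, $\tau^3_{21}=-\tfrac{1}{2}$, all others $0$). Try $x^k:=y^k-\tfrac{1}{2}\sum_{i,j}s^k_{ij}\,y^iy^j$ with $s^k_{ij}=s^k_{ji}$. Using $y^i(p)=0$ and $(X_ay^i)(p)=\delta_a^i$ one computes $(X_ax^k)(p)=\delta_a^k$ (so (a),(b) are untouched) and $(X_aX_bx^k)(p)=c^k_{ab}-s^k_{ab}$. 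Hence the entire problem collapses to choosing the symmetric array $s$ with $s^k_{ab}=c^k_{ab}-\tau^k_{ab}$ for all $a,b,k$.

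The only thing that could obstruct this is that $c^k_{ab}-\tau^k_{ab}$ fail to be symmetric in $a,b$. But $c^k_{ab}-c^k_{ba}=([X_a,X_b]\,y^k)(p)$, which by (b) is exactly the $X_k$-component of $[X_a,X_b]_p$ in the frame $\{X_1,X_2,X_3\}$. For the frame in play (a Carnot frame at $p$, which is the setting in which this lemma is used), $[X_1,X_2]_p=(X_3)_p$ and $[X_1,X_3]_p=[X_2,X_3]_p=0$, so the only nonzero antisymmetric entry is $c^3_{12}-c^3_{21}=1$, matching $\tau^3_{12}-\tau^3_{21}=1$. Thus $s^k_{ab}:=c^k_{ab}-\tau^k_{ab}$ is symmetric and solves the system. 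Finally, since the quadratic correction leaves first derivatives at $p$ unchanged, the differential of $(x^1,x^2,x^3)$ at $p$ is invertible, so by the inverse function theorem these functions form a coordinate system on some neighborhood $U'\subseteq U$ of $p$, and by construction they satisfy (a)--(d).

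The main obstacle is the bookkeeping in the middle step: carefully establishing the identity $(X_aX_bx^k)(p)=c^k_{ab}-s^k_{ab}$ — where $y^i(p)=0$ is essential, since it kills the unwanted terms of $X_aX_b(y^iy^j)$ at $p$ — and then verifying that the antisymmetric obstruction is absorbed precisely by the bracket relations of the frame at $p$. In particular, one must check that the values $\pm\tfrac{1}{2}$ in (d) are exactly what is forced by $[X_1,X_2]_p=(X_3)_p$, so that no further (e.g. non-polynomial) adjustment is required.
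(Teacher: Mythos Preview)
Your approach is essentially the paper's: begin with an arbitrary chart, translate and apply a linear change of coordinates to secure (a)--(b), then subtract a homogeneous quadratic polynomial to install the prescribed second derivatives, and finish with the inverse function theorem. The difference lies in which quadratic monomials you use and in the extra hypothesis you import.

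The paper subtracts only the horizontal monomials $(x^1)^2$, $x^1x^2$, $(x^2)^2$; it never touches $x^3$. With that choice the only symmetry obstruction is $c^k_{12}-c^k_{21}=([X_1,X_2]\,x^k)(p)=(X_3\,x^k)(p)=\delta^k_3$, which is exactly cancelled by the target values $\tau^3_{12}-\tau^3_{21}=1$, so no Carnot assumption is needed. You instead allow all monomials $y^iy^j$ with $i,j\in\{1,2,3\}$ and then invoke $[X_1,X_3]_p=[X_2,X_3]_p=0$ to kill the remaining obstructions. But the lemma as stated does \emph{not} assume the frame is Carnot at $p$; that hypothesis appears only in the next lemma. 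As a proof of the lemma exactly as stated, your argument therefore has a gap.

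That said, you have stumbled on a real subtlety: part~(c) as literally written, with $i,j,k\in\{1,2,3\}$, cannot hold for a general oriented orthonormal frame, since $(X_1X_3x^k-X_3X_1x^k)(p)=([X_1,X_3]\,x^k)(p)$ is independent of the coordinate choice and need not vanish. The paper's own correction, using only the horizontal quadratic terms, does not control the cases with $i=3$ or $j=3$ either. Every downstream use of this lemma requires only $i,j\in\{1,2\}$, so this is almost certainly a slip in the statement. If you restrict (c) to $i,j\in\{1,2\}$, your argument goes through verbatim without any Carnot hypothesis, and it then coincides with the paper's proof.
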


\begin{proof}
Let Let $(x^1, x^2, x^3)$ be the coordinate functions associated to a chart $(U, \phi)$.  After post-composing our coordinate functions with a translation in $\mathbb{R}^3$, part (a) is evident, and assuming part (a), we can get part (b) by post-composing with a linear transformation of $\mathbb{R}^3$.  Without loss of generality then, we may assume parts (a) and (b) are true for $(x^1, x^2, x^3)$ in order to prove parts (c) and (d).  Define the following functions on $U$: \begin{eqnarray} \bar{x}^1 & = & x^1 - \frac {1} {2} \left[ (X_1 X_1)_p x^1 \right] (x^1)^2 - \left[ (X_1 X_2)_p x^1 \right] x^1 x^2 - \frac {1} {2} \left[ (X_2 X_2)_p x^1 \right] (x^2)^2 \\ 
\label{x12ndline} & = & x^1 - \frac {1} {2} \left[ (X_1 X_1)_p x^1 \right] (x^1)^2 - \left[ (X_2 X_1)_p x^1 \right] x^1 x^2 - \frac {1} {2} \left[ (X_2 X_2)_p x^1 \right] (x^2)^2 \\
\bar{x}^2 & = & x^2 - \frac {1} {2} \left[ (X_1 X_1)_p x^2 \right] (x^1)^2 - \left[ (X_1 X_2)_p x^2 \right] x^1 x^2 - \frac {1} {2} \left[ (X_2 X_2)_p x^2 \right] (x^2)^2 \\ 
\label{x22ndline} & = & x^2 - \frac {1} {2} \left[ (X_1 X_1)_p x^2 \right] (x^1)^2 - \left[ (X_2 X_1)_p x^2 \right] x^1 x^2 - \frac {1} {2} \left[ (X_2 X_2)_p x^2 \right] (x^2)^2 \\
\bar{x}^3 & = & x^3 - \frac {1} {2} \left[ (X_1 X_1)_p x^3 \right] (x^1)^2 - \left( \left[ (X_1 X_2)_p x^3 \right]  - \frac {1} {2} \right) x^1 x^2 - \frac {1} {2} \left[ (X_2 X_2)_p x^3 \right] (x^2)^2 \\ 
\label{x32ndline} & = & x^3 - \frac {1} {2} \left[ (X_1 X_1)_p x^3 \right] (x^1)^2 - \left( \left[ (X_2 X_1)_p x^3 \right]  + \frac {1} {2} \right) x^1 x^2 - \frac {1} {2} \left[ (X_2 X_2)_p x^3 \right] (x^2)^2\end{eqnarray}  Equations (\ref{x12ndline}), (\ref{x22ndline}), and (\ref{x32ndline}) follow from the equality $X_3 = X_1 X_2 - X_2 X_1$.  By the Inverse Function Theorem, $(\bar{x}^1, \bar{x}^2, \bar{x}^3)$ are coordinate functions on some open neighborhood $U' \subseteq U$ of $p$.  The reader can easily verify that parts (a)--(d) of the lemma hold for $(\bar{x}^1, \bar{x}^2, \bar{x}^3)$ on $U'$.
\end{proof}

\begin{lemma} \label{3ordlem} Suppose $\{X_1, X_2\}$ is a Carnot frame for $\mathbb{H}$ at $p$ on $U$.  There exist coordinate functions $(x^1, x^2, x^3)$ on an open neighborhood $U'' \subseteq U$ such that parts (a)--(d) of Lemma \ref{2ordlem} hold and \begin{equation} (X_i X_j X_k)_p x^l = 0 \end{equation} for all $i, j, k \in \{1, 2\}$ and $l \in \{1,2,3\}$. \end{lemma}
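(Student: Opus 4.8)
\section*{Proof proposal for Lemma \ref{3ordlem}}

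The plan is to take the coordinate functions supplied by Lemma \ref{2ordlem} and correct them by a cubic polynomial, in exact parallel with the quadratic correction used to prove that lemma. So fix coordinates $(x^1,x^2,x^3)$ near $p$ for which parts (a)--(d) of Lemma \ref{2ordlem} hold, and in these coordinates write $X_\gamma = \partial_\gamma + C_\gamma$ with $C_\gamma = \sum_\beta c_{\gamma\beta}\partial_\beta$; by part (b) each coefficient $c_{\gamma\beta}$ vanishes at $p$. The key elementary input is a vanishing principle for functions $m$ that vanish to third order at $p$ (for instance a homogeneous cubic in $x^1,x^2,x^3$): the value, gradient, and Hessian of such an $m$ all vanish at $p$, and since the $C_\gamma$ also vanish at $p$ one computes that $(X_\alpha X_\beta m)_p = 0$ for all $\alpha,\beta$, and $(X_iX_jX_k m)_p = (\partial_i\partial_j\partial_k m)_p$ for all $i,j,k$ — in the expansion of $X_iX_jX_k m$ the only term surviving at $p$ is the one in which all three ``$\partial$'' pieces fall on $m$. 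Consequently, adding such an $m$ to a coordinate $x^l$ preserves parts (a)--(d) of Lemma \ref{2ordlem} while shifting $(X_iX_jX_k)_p x^l$ by $(\partial_i\partial_j\partial_k m)_p$.

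The substantive step is to show that for $i,j,k\in\{1,2\}$ the eight numbers $(X_iX_jX_k)_p x^l$ are invariant under permuting $i,j,k$, so that for each $l$ only four of them occur, say $a^l,b^l,c^l,d^l$ attached to the multisets $\{1,1,1\}$, $\{1,1,2\}$, $\{1,2,2\}$, $\{2,2,2\}$. Transposing two of the entries $i,j,k$ alters $X_iX_jX_k$ by a combination of the second-order operators $X_1X_3$, $X_3X_1$, $X_2X_3$, $X_3X_2$ (using $X_3=[X_1,X_2]$), and applied to $x^l$ and evaluated at $p$ each of these vanishes by part (c) of Lemma \ref{2ordlem}; the cases of part (c) involving the index $3$ are precisely where the Carnot hypothesis $[X_1,X_3]_p=[X_2,X_3]_p=0$ does its work (it forces $(X_aX_3 x^l)_p = (X_3X_a x^l)_p$ for $a\in\{1,2\}$). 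Hence the eight values collapse to the four listed.

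With the symmetry in hand, define
\begin{equation}
\tilde{x}^l = x^l - \frac{a^l}{6}(x^1)^3 - \frac{b^l}{2}(x^1)^2 x^2 - \frac{c^l}{2}x^1(x^2)^2 - \frac{d^l}{6}(x^2)^3, \qquad l=1,2,3.
\end{equation}
By the vanishing principle, $(X_iX_jX_k)_p \tilde{x}^l$ equals $(X_iX_jX_k)_p x^l$ minus the matching third partial derivative of the subtracted cubic, and the coefficients above are chosen so that this difference is $0$ for every $i,j,k\in\{1,2\}$; the same principle shows that $(\tilde{x}^1,\tilde{x}^2,\tilde{x}^3)$ still satisfies parts (a)--(d) of Lemma \ref{2ordlem}. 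Since the subtracted cubics have vanishing gradient at $p$, the change of coordinates has identity Jacobian at $p$, so by the Inverse Function Theorem $(\tilde{x}^1,\tilde{x}^2,\tilde{x}^3)$ is a coordinate system on some open $U''\subseteq U$ containing $p$, which is the assertion of the lemma.

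I expect the symmetry step to be the only genuine obstacle: once it is established, the correction and its verification are forced. The point is that the antisymmetric part of $(X_iX_jX_k)_p x^l$, which involves the mixed second derivatives $(X_aX_3)_p x^l$, cannot be removed by a cubic correction (which contributes only a symmetric tensor), so it must already vanish — and it is exactly the Carnot property, via Lemma \ref{2ordlem}(c), that guarantees this. The rest is the careful bookkeeping behind the vanishing principle, which is routine but should be written out so that no ``connection-coefficient'' cross term in $(X_iX_jX_k m)_p$ is overlooked.
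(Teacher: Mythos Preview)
Your argument is correct and shares the paper's template—start from the coordinates of Lemma \ref{2ordlem} and subtract a polynomial correction—but the execution is organized differently, and the difference is worth noting. The paper builds explicit weighted-degree-three functions $\phi^{(i,j,k)}$ (including cross terms $x^1x^3$ and $x^2x^3$) that are ``dual'' to six of the eight operators $(X_iX_jX_k)_p$, subtracts the corresponding combination, and only then invokes the Carnot condition to handle the two leftover triples $(1,2,1)$ and $(2,1,2)$ via the operator identity $2(X_1X_2X_1)_p=(X_1X_1X_2)_p+(X_2X_1X_1)_p-[X_1,X_3]_p$. You instead front-load the work: you extract full $S_3$-symmetry of $(X_iX_jX_k)_p\,x^l$ from the index-$3$ cases of Lemma \ref{2ordlem}(c), after which a symmetric cubic in $x^1,x^2$ alone, with just four coefficients per $l$, suffices. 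Your route is tidier (no $x^3$-terms, four parameters instead of eight $\phi$'s), while the paper's has the virtue of making the dependence on the Carnot hypothesis completely explicit. One small expository point: in your argument as written, the symmetry is delivered entirely by Lemma \ref{2ordlem}(c) applied to the pairs $(a,3)$ and $(3,a)$, so the Carnot hypothesis is never invoked separately; your parenthetical locating its role there is misplaced (Carnot gives $(X_aX_3\,x^l)_p=(X_3X_a\,x^l)_p$, but you need both sides to vanish, and that is what part (c) asserts outright). In effect the Carnot hypothesis is hidden inside your appeal to Lemma \ref{2ordlem}, whereas the paper uses it overtly at the final step.
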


\begin{proof} By Lemma \ref{2ordlem}, there exists coordinate functions $(x^1, x^2, x^3)$ on some open neighborhood $U' \subseteq U$ of $p$ such that parts (a)--(d) of Lemma \ref{2ordlem}.  Define the following functions for $q \in U'$: \begin{eqnarray} \phi^{(1,1,1)}(q) & = & \frac {1} {6} (x^1(q))^3 \\
\phi^{(1,1,2)}(q) & = & \frac {1} {4} (x^1(q))^2 x^2(q) + \frac {1} {2} x^1(q) x^3 (q) \\
\phi^{(1,2,1)}(q) & = & 0 \\
\phi^{(1,2,2)}(q) & = &  \frac {1} {4} x^1(q) (x^2(q))^2 + \frac {1} {2} x^2(q) x^3 (q) \\
\phi^{(2,1,1)}(q) & = &  \frac {1} {4} (x^1(q))^2 x^2(q) - \frac {1} {2} x^1(q) x^3 (q) \\
\phi^{(2,1,2)}(q) & = & 0 \\
\phi^{(2,2,1)}(q) & = & \frac {1} {4} x^1(q) (x^2(q))^2 - \frac {1} {2} x^2(q) x^3 (q) \\
\phi^{(2,2,2)}(q) & = & \frac {1} {6} (x^2(q))^3 
\end{eqnarray} We note that for $i, j, k, \alpha, \beta, \gamma \in \{1, 2\}$ except $(i, j, k) \in \{(1,2,1), (2,1,2) \}$ we have that \begin{equation} (X_i X_j X_k)_p \phi^{(\alpha, \beta, \gamma)} = \delta_i^\alpha \delta_j^\beta \delta_k^\gamma \end{equation} and for $(i, j, k) \in \{(1,2,1), (2,1,2) \}$, we have that \begin{equation} (X_i X_j X_k)_p \phi^{(\alpha, \beta, \gamma)} = 0 \end{equation}  Now define, for $l \in \{1,2,3\}$, the following for $q \in U'$: \begin{equation} \bar{x}^l(q) = x^l(q) - \sum_{i, j, k \in \{1,2\}} \left[ (X_i X_j X_k)_p x^l \right] \phi^{(i, j, k)}(q) \end{equation} By the Inverse Function Theorem, there exists an open neighborhood $U'' \subseteq U'$ on which $(\bar{x}^1, \bar{x}^2, \bar{x}^3)$ are coordinate functions, and the reader can easily verify that $(\bar{x}^1, \bar{x}^2, \bar{x}^3)$ satisfy parts (a)--(d) of Lemma \ref{2ordlem}.  We also see quickly that for all $l \in \{1,2,3\}$ and for all $i, j, k \in \{1, 2\}$ except $(i, j, k) \in \{(1,2,1), (2,1,2) \}$, we have that \begin{equation} \label{3rdord} (X_i X_j X_k)_p x^l = 0 \end{equation} To show the same for $(i, j, k) \in \{(1,2,1), (2,1,2) \}$, we note that, because $\{X_1, X_2 \}$ is a Carnot frame for $\mathbb{H}$ at $p$, it follows that \begin{eqnarray} (X_1 X_2 X_1)_p & = & \frac {1} {2} (X_1 X_1 X_2)_p + \frac {1} {2} (X_2 X_1 X_1)_p - \frac {1} {2} [X_1, X_3]_p \\ & = & \frac {1} {2} (X_1 X_1 X_2)_p + \frac {1} {2} (X_2 X_1 X_1)_p \\ (X_2 X_1 X_2)_p & = & \frac {1} {2} (X_2 X_2 X_1)_p + \frac {1} {2} (X_1 X_2 X_2)_p + \frac {1} {2} [X_2, X_3]_p \\ & = & \frac {1} {2} (X_2 X_2 X_1)_p + \frac {1} {2} (X_1 X_2 X_2)_p
\end{eqnarray} Thus equation (\ref{3rdord}) for $(i, j, k) \not\in \{(1,2,1), (2,1,2) \}$ implies equation (\ref{3rdord}) for $(i, j, k) \in \{(1,2,1), (2,1,2) \}$.
\end{proof}

\begin{theorem} \label{carnotflatequiv} Let $\{X_1, X_2, X_3\}$ be as above.  The following are equivalent.
\begin{enumerate}
\item[(a)] $\{X_1, X_2\}$ is a Carnot frame for $\mathcal{H}$ at $p$.
\item[(b)] On some open neighborhood $\hat{U} \subseteq U$ of $p$, there exists a flattening of $g$ at $p$ with frame $\{\hat{X}_1, \hat{X}_2, \hat{X}_3 \}$ such that, for all first, second, and third weighted order multi-indices $\alpha$, \begin{equation} \hat{X}^\alpha_p = X^\alpha_p \end{equation} as smooth differential operators at p..
\end{enumerate}
\end{theorem}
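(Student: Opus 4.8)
The plan is to prove the two implications separately; (b)$\Rightarrow$(a) is a short unwinding of the definitions, while (a)$\Rightarrow$(b) is the substantial direction and is where Lemmas \ref{2ordlem} and \ref{3ordlem} enter. For (b)$\Rightarrow$(a): suppose a flattening $\hat g$ at $p$ with frame $\{\hat X_1,\hat X_2,\hat X_3\}$ exists matching $\{X_1,X_2,X_3\}$ through weighted order $3$. The multi-indices $(1,3)$ and $(3,1)$ have weighted order $3$, so $\hat X_1\hat X_3|_p=X_1X_3|_p$ and $\hat X_3\hat X_1|_p=X_3X_1|_p$ as differential operators at $p$. The frame of a flattening satisfies the Heisenberg relations on all of $\hat U$, so $[\hat X_1,\hat X_3]\equiv 0$; hence, as operators at $p$, $[X_1,X_3]_p=X_1X_3|_p-X_3X_1|_p=\hat X_1\hat X_3|_p-\hat X_3\hat X_1|_p=0$, and since $[X_1,X_3]$ is a vector field this forces $[X_1,X_3]_p=0$ in $T_pM$. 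The same argument with $(2,3),(3,2)$ gives $[X_2,X_3]_p=0$, so $\{X_1,X_2\}$ is a Carnot frame at $p$.

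For (a)$\Rightarrow$(b), assume $\{X_1,X_2\}$ is a Carnot frame at $p$. First I would invoke Lemma \ref{3ordlem} to obtain coordinates $(x^1,x^2,x^3)$ on a neighborhood $\hat U$ of $p$ for which $\{X_1,X_2,X_3\}$ satisfies conclusions (a)--(d) of Lemma \ref{2ordlem} and in addition $(X_iX_jX_k)_p x^l=0$ for all $i,j,k\in\{1,2\}$, $l\in\{1,2,3\}$. On $\hat U$ define the standard left-invariant Heisenberg frame $\hat X_1=\partial_{x^1}-\tfrac{1}{2} x^2\partial_{x^3}$, $\hat X_2=\partial_{x^2}+\tfrac{1}{2} x^1\partial_{x^3}$, $\hat X_3=\partial_{x^3}$; a direct computation gives $[\hat X_1,\hat X_2]=\hat X_3$ and $[\hat X_1,\hat X_3]=[\hat X_2,\hat X_3]=0$ everywhere, the transition matrix from $\{\partial_{x^i}\}$ is unipotent so this is a genuine frame, and set $\hat g=\hat X_1\otimes\hat X_1+\hat X_2\otimes\hat X_2$. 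Since the given $\{X_1,X_2\}$ is orthonormal for $\mathcal H$ by our standing assumption, once we show $\hat X^\alpha_p=X^\alpha_p$ for every $\alpha$ of weighted order at most $3$, the frame $\{X_1,X_2\}$ witnesses that $\hat g$ is a flattening matching the given frame, which is exactly statement (b).

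The heart of the argument is a principle I would isolate: \emph{if two frames of vector fields both satisfy conclusions} (a)--(d) \emph{of Lemma \ref{2ordlem} and the conclusion of Lemma \ref{3ordlem} relative to a fixed coordinate system at $p$, then they generate the same differential operators of weighted order $\le 3$ at $p$}. One checks from the explicit formulas that $\{\hat X_1,\hat X_2,\hat X_3\}$ does satisfy those conclusions ($\hat X_ix^j|_p=\delta^j_i$; $\hat X_1\hat X_2x^3|_p=\tfrac{1}{2}$ and $\hat X_2\hat X_1x^3|_p=-\tfrac{1}{2}$ with all other second derivatives of the $x^l$ vanishing at $p$; and $\hat X_i\hat X_j\hat X_k$ kills every $x^l$ at $p$). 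Then, for $\alpha$ of weighted order $\le 3$ one has $|\alpha|\le 3$, and using $X_3=X_1X_2-X_2X_1$ (respectively $\hat X_3=\hat X_1\hat X_2-\hat X_2\hat X_1$) one rewrites $X^\alpha_p$ and $\hat X^\alpha_p$ as the same integer combination of operators indexed by words in $\{1,2\}$ of length $\le 3$, reducing to such $\alpha$. An operator of that type at $p$ has order $\le 3$, hence is determined by its action on monomials $x^{k_1}\cdots x^{k_r}$ with $r\le 3$; expanding by the Leibniz rule and using $x^k(p)=0$, each surviving term is a product of factors $(X^\gamma x^{k_j})(p)$ with $\gamma$ an order-preserving subword of $\alpha$ --- again a $\{1,2\}$-word of length $\le 3$ --- and with combinatorial coefficients independent of the frame; by Lemma \ref{2ordlem}(b)--(d) and Lemma \ref{3ordlem} these values coincide for the two frames, so the operators agree on all such monomials and hence are equal. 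Applying this to $\{X_1,X_2,X_3\}$ and $\{\hat X_1,\hat X_2,\hat X_3\}$ completes (a)$\Rightarrow$(b).

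I expect the main obstacle to be organizational rather than conceptual: one has to keep careful track of which subwords $\gamma$ arise in the Leibniz expansion and check that each contributes a value controlled by the two lemmata --- the length-$3$ subword values being exactly what the Carnot hypothesis buys through Lemma \ref{3ordlem}, and the ``mixed'' length-$\le 2$ values (those that appear once an $X_3$ is eliminated) being pinned down by the exceptional $\pm\tfrac{1}{2}$ entries in Lemma \ref{2ordlem}(c)--(d). It is precisely the unavailability of this third-order coordinate normalization without the Carnot condition that makes (b) genuinely equivalent to (a) rather than automatic.
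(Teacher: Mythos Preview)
Your proof is correct and follows essentially the same route as the paper: for $(b)\Rightarrow(a)$ the paper expands $[X_1,X_3]_p$ and $[X_2,X_3]_p$ into $\{1,2\}$-words of length~$3$ before invoking the flattening hypothesis, whereas your direct use of the weighted-order-$3$ multi-indices $(1,3),(3,1),(2,3),(3,2)$ is a slightly cleaner packaging of the same computation. For $(a)\Rightarrow(b)$ the paper likewise builds the standard Heisenberg frame in the coordinates of Lemma~\ref{3ordlem}, verifies your items (i)--(iv), and then---where you invoke the principle that order-$\le 3$ point operators are determined by their values on monomials of degree $\le 3$---writes out the third-order Taylor expansion with integral remainder to reach the identical conclusion.
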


\begin{proof}
$(b) \implies (a)$.  Assuming (b), we have that \begin{eqnarray} [X_1, X_3]_p & = & (X_1 X_1 X_2)_p - 2 (X_1 X_2 X_1)_p + (X_2 X_1 X_1)_p \\ & = & (\hat{X}_1 \hat{X}_1 \hat{X}_2)_p - 2 (\hat{X}_1 \hat{X}_2 \hat{X}_1)_p + (\hat{X}_2 \hat{X}_1 \hat{X}_1)_p \\ & = & [\hat{X}_1, \hat{X}_3]_p = 0  \end{eqnarray} and similarly \begin{eqnarray} [X_2, X_3]_p & = & 2 (X_2 X_1 X_2)_p - (X_2 X_2 X_1)_p - (X_1 X_2 X_2)_p \\ & = & 2 (\hat{X}_2 \hat{X}_1 \hat{X}_2)_p - (\hat{X}_2 \hat{X}_2 \hat{X}_1)_p - (\hat{X}_1 \hat{X}_2 \hat{X}_2)_p \\ & = & [\hat{X}_2, \hat{X}_3]_p = 0  \end{eqnarray}  Thus $\{X_1, X_2\}$ is a Carnot frame for $\mathcal{H}$ at $p$.

$(a) \implies (b)$.  By Lemma \ref{3ordlem}, there exists coordinate functions $(x^1, x^2, x^3)$ on some open neighborhood $U'' \subseteq U$ of $p$ such that parts (a)--(d) of Lemma \ref{2ordlem} hold and \begin{equation} (X_i X_j X_k)_p x^l = 0 \end{equation} for all $i, j, k, \in \{1, 2\}$ and $l \in \{1,2,3\}$.  Define the following smooth vector fields on $U''$: \begin{equation} \hat{X}_1 = \frac {\partial} {\partial x^1} - \frac {1} {2} x^2 \frac {\partial} {\partial x^3} \qquad \hat{X}_2 = \frac {\partial} {\partial x^2} + \frac {1} {2} x^1 \frac {\partial} {\partial x^3} \qquad \hat{X}_3 = \frac {\partial} {\partial x^3} \end{equation} A quick calculation shows that the Heisenberg bracket relations hold for the frame $\{ \hat{X}_1, \hat{X}_2, \hat{X}_3 \}$ on $U''$.  The reader can also verify that
\begin{enumerate}
\item[(i)] $(\hat{X}_i)_p  x^j = \delta^i_j$ for all $i, j \in \{1,2,3\}$;
\item[(ii)] $(\hat{X}_i \hat{X}_j)_p x^k = 0$ for all $i, j, k \in \{1, 2, 3\}$ except $(i, j, k) \in \{(1, 2, 3), (2, 1, 3)\}$;
\item[(iii)] $(\hat{X}_1 \hat{X}_2)_p x^3 = \frac {1} {2}$ and $(\hat{X}_2 \hat{X}_1)_p x^3 = -\frac {1} {2}$; and
\item[(iv)] $(\hat{X}_i \hat{X}_j \hat{X}_k)_p x^l = 0$ for all $i, j, k \in \{1, 2\}$ and $l \in \{1,2,3\}$.
\end{enumerate}
Thus we have that, for all first, second, and third weighted order multi-indices $\alpha$, \begin{equation} (\hat{X}_\alpha)_p x^i = (X_\alpha)_p x^i \end{equation} for $i \in \{1, 2, 3\}$.  Applying the Leibniz rule, we get that for all multi-indices $\alpha$ such that $|\alpha|_w \leq 3$ and for all multi-indices $\beta$, we have that \begin{equation} \label{equalphabeta} (\hat{X}_\alpha)_p x^\beta = (X_\alpha)_p x^\beta \end{equation} and in particular, if $|\beta|_w > |\alpha|_w$, then both sides of the above equation equal 0.  To complete the proof, let $\hat{U} \subseteq U$ be an open disc centered at $p$.  For all smooth function $f$ on $\hat{U}$, the third order Taylor's formula tells us that for any $q \in \hat{U}$ with coordinates $(x^1(q), x^2(q), x^3(q))$, \begin{equation} \label{taylor} f(q) = \sum_{|\beta| \leq 3} \frac {1} {|\beta| !} x^\beta(q) (\partial_\beta f)(p) + \sum_{|\beta| = 4} \frac {1} {6} x^\beta(q) \int_0^1 (1 - t)^3 (\partial_\beta f)(\gamma(t)) \, \dif t \end{equation} where $\gamma : [0,1] \rightarrow \hat{U}$ is the continuous path defined in terms of the coordinate functions $(x^1, x^2, x^3)$ by \begin{equation} \gamma(t) = (t x^1(q), t x^2(q), t x^3 (q)) \end{equation}  Using equations (\ref{equalphabeta}) and (\ref{taylor}), we can clearly see that \begin{equation} (\hat{X}_\alpha)_p f = (X_\alpha)_p f \end{equation} for all multi-indices $\alpha$ such that $|\alpha|_w \leq 3$ and for all smooth functions $f$ on $\hat{U}$, proving this direction of the theorem.
\end{proof}

Because of Theorem \ref{carnotflatequiv}, we can now fix the direction of $\hat{X}_3$ at $p$.

\begin{corollary} \label{torsionvaluecor} Let $\{X_1, X_2\}$ be any ordered orthonormal frame for $\mathcal{H}$ on $U$, let $X_3 = [X_1, X_2]$ and let $\{\xi_1, \xi_2, \xi_3\}$ be the dual frame to $\{X_1, X_2, X_3\}$ on $U$. Suppose $\hat{g}$ is a flattening of $g$ at $p$ with frame $\{\hat{X}_1, \hat{X}_2, \hat{X}_3 \}$.  Then \begin{equation} (\hat{X}_3)_p = [(\xi_3)_p ([X_2, X_3])] (X_1)_p + [(\xi_3)_p ([X_3, X_1])] (X_2)_p + (X_3)_p \end{equation} \end{corollary}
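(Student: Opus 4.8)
The plan is to run the computation from the proof of Theorem~\ref{carnotexistence} in reverse. First I would unwind the definition of a flattening: since $\hat g$ is a flattening of $g$ at $p$ with frame $\{\hat X_1,\hat X_2,\hat X_3\}$, there is an ordered orthonormal frame $\{X_1',X_2'\}$ for $\mathcal H$ on $\hat U$ --- a priori different from the frame $\{X_1,X_2\}$ in the statement --- such that $\hat X^\alpha_p=(X')^\alpha_p$ for every multi-index $\alpha$ with $|\alpha|_w\le 3$, where $X_3'=[X_1',X_2']$. Applying this to the weighted-order-$2$ multi-index $\alpha=(3)$ (equivalently, using $\hat X_3=[\hat X_1,\hat X_2]$ together with the fact that $\hat X_1\hat X_2$ and $\hat X_2\hat X_1$ have weighted order $2$ and agree at $p$ with $X_1'X_2'$ and $X_2'X_1'$) gives $(\hat X_3)_p=(X_3')_p$ as tangent vectors, so it suffices to express $(X_3')_p$ in terms of the given frame $\{X_1,X_2,X_3\}$.

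Next I would invoke Theorem~\ref{carnotflatequiv}: the existence of the flattening $\hat g$ matching $\{X_1',X_2'\}$ to weighted order $3$ is exactly condition~(b) of that theorem for the frame $\{X_1',X_2'\}$, hence condition~(a) holds and $\{X_1',X_2'\}$ is a Carnot frame for $\mathcal H$ at $p$; that is, $[X_1',X_3']_p=0=[X_2',X_3']_p$. Now write the Carnot frame in terms of the given frame by $X_1'=\cos\theta\,X_1+\sin\theta\,X_2$ and $X_2'=-\sin\theta\,X_1+\cos\theta\,X_2$ with $\theta:\hat U\to\mathbb R$ smooth. Equation~(\ref{x3value}) then gives $(X_3')_p=-((X_1)_p\theta)(X_1)_p-((X_2)_p\theta)(X_2)_p+(X_3)_p$, so only the two first-order horizontal derivatives $(X_1)_p\theta$ and $(X_2)_p\theta$ remain to be determined.

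To pin those down I would feed the Carnot condition into the displayed formulas for $[X_1',X_3']$ and $[X_2',X_3']$: since the rotation matrix is invertible, $(V_1)_p=0=(V_2)_p$, where $V_1$ and $V_2$ are as in (\ref{V1}) and (\ref{V2}). Applying $(\xi_3)_p$ to these two vanishing identities annihilates the $X_1$- and $X_2$-components --- and hence every term involving second derivatives of $\theta$ --- leaving $(\xi_3)_p([X_1,X_3])-(X_2)_p\theta=0$ and $(\xi_3)_p([X_2,X_3])+(X_1)_p\theta=0$. Substituting $(X_1)_p\theta=-(\xi_3)_p([X_2,X_3])$ and $(X_2)_p\theta=(\xi_3)_p([X_1,X_3])=-(\xi_3)_p([X_3,X_1])$ into the expression for $(X_3')_p$ and using $(\hat X_3)_p=(X_3')_p$ produces the asserted formula. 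The only genuinely delicate point is the bookkeeping: keeping the flattening's frame $\{X_1',X_2'\}$ distinct from the arbitrary frame $\{X_1,X_2\}$ of the statement, and correctly reading off $(\hat X_3)_p=(X_3')_p$ from the weighted-order-$2$ clause of the definition of flattening; the remaining computation is precisely the reverse of the one already carried out in Theorem~\ref{carnotexistence}.
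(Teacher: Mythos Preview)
Your argument is correct and follows essentially the same route as the paper's proof: both use Theorem~\ref{carnotflatequiv} to pass from the flattening to a Carnot frame, then read off $(X_3')_p$ from equation~(\ref{x3value}) together with the $\xi_3$-components of $(V_1)_p=(V_2)_p=0$, which are exactly equations~(\ref{thetaderivative1}) and the line after it. The only difference is organizational: the paper first builds a specific Carnot frame via Theorem~\ref{carnotexistence}, establishes formula~(\ref{carnotx3}) for it, invokes Corollary~\ref{carnottorsion} to transfer the formula to \emph{every} Carnot frame, and then links to the flattening; you instead start from the flattening's own Carnot frame and compute directly, which bypasses the appeal to Corollary~\ref{carnottorsion}.
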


\begin{proof}
According to Theorem \ref{carnotexistence}, there exists a Carnot frame $\{\bar{X}_1, \bar{X}_2\}$ for $\mathcal{H}$ at $p$ on $U$, and equation (\ref{x3value}), along with equations (\ref{thetaderivative1}) through (\ref{thetaderivative6}), tell us that \begin{equation} \label{carnotx3} (\bar{X}_3)_p = [(\xi_3)_p ([X_2, X_3])] (X_1)_p + [(\xi_3)_p ([X_3, X_1])] (X_2)_p + (X_3)_p \end{equation}  Corollary \ref{carnottorsion} then tells us that equation (\ref{carnotx3}) holds for all Carnot frames.

Because $\hat{g}$ is a flattening of $g$ at $p$ with frame $\{\hat{X}_1, \hat{X}_2, \hat{X}_3 \}$, we know from Theorem \ref{carnotflatequiv} that there exists a Carnot frame $\{X'_1, X'_2\}$ such that $(X'_3)_p = (\hat{X}_3)_p$.  Corollary \ref{carnottorsion} and equation (\ref{carnotx3}) then imply the corollary.
\end{proof}

\section{The Natural Connection on (2,3) Sub-Riemannian Manifolds}

We now come to the main result of this paper.  First, we define the natural connection on $M$.

\begin{definition} Let $\{X_1, X_2 \}$ be an ordered orthonormal frame for $\mathcal{H}$ on $U$ as above. The \emph{natural connection} for the sub-Riemannian structure $\left( \mathcal{H}, \left< \cdot, \cdot \right> \right)$ on $U$ is the affine connection compatible with $\left( \mathcal{H}, \left< \cdot, \cdot \right> \right)$ such that, for all $q \in U$,
\begin{enumerate}
\item[(a)] $T(X_1, X_2) = -[(\xi_3)_q ([X_2, X_3])] X_1 -[(\xi_3)_q ([X_3, X_1])] X_2 - X_3$
\item[(b)] $T(X_2, X_3) = (X_2 \theta) [(\xi_3)_q ([X_2, X_3])] X_1 +(X_2 \theta) [(\xi_3)_q ([X_3, X_1])] X_2 + (X_2 \theta) \, X_3$
\item[(c)] $T(X_3, X_1) = (X_1 \theta) [(\xi_3)_q ([X_2, X_3])] X_1 + (X_1 \theta) [(\xi_3)_q ([X_3, X_1])] X_2 + (X_1 \theta) \, X_3$
\item[(d)] $R_\mathcal{H} V = 0$ for all $V \in \mathcal{X}(U)$
\end{enumerate}
\end{definition}

The following theorem, the main theorem of the paper, tells us the main property of the natural connection: that, for any point $p$ in $M$ and any flattening $\hat{g}$ of $g$ at $p$, the natural connection agrees at $p$ with the connection the flattening inherits from the Heisenberg group, as derived in Section 3.  Thus the natural connection is the unique affine connection compatible with the sub-Riemannian structure of $M$ that agrees with the parallel structure of the Heisenberg group, just as the Levi-Civita connection is the unique connection compatible with the metric of a Riemannian manifold that agrees with the parallel structure of Euclidean space.

\begin{theorem} \label{maintheoremofpaper} Let $\nabla$ be the natural connection on $M$ compatible with the sub-Riemannian structure $\left( \mathcal{H}, \left< \cdot, \cdot \right> \right)$.  Let $p \in M$, and let $\hat{g}$ be a flattening of $g$ at $p$ with frame $\{\hat{X}_1, \hat{X}_2, \hat{X}_3 \}$ on an open neighborhood $\hat{U}$ of $p$. Let $\hat{\nabla}$ be the affine connection on $\hat{U}$ with co-metric $\hat{g}$ inherited through its isometry with an open neighborhood of the Heisenberg group $\mathbb{H}^1$.  Then $\nabla = \hat{\nabla}$ at $p$.
\end{theorem}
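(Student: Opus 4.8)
The plan is to deduce the statement from Theorem \ref{comptheorem}: since $\nabla$ and $\hat\nabla$ are both compatible connections, they agree at $p$ once one checks that their torsion tensors and their horizontal curvature operators agree at $p$. So the proof splits into a (trivial) curvature comparison and a (short) torsion computation carried out in a Carnot frame supplied by the flattening.

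For the curvature: $\hat\nabla$ is pulled back, along the isometry of $(\hat U,\hat g)$ with an open set in $\mathbb{H}^1$, from the connection on $\mathbb{H}^1$ compatible with $\mathfrak{h}$; hence it is compatible with $\hat g$, and by Lemma \ref{torsioncurvature} (transported along the isometry) it satisfies $\hat T(\hat X_1,\hat X_2)=-\hat X_3$, $\hat T(\hat X_2,\hat X_3)=0$, $\hat T(\hat X_3,\hat X_1)=0$ on $\hat U$ and $\hat R_\mathcal{H}\equiv 0$. Part (d) of the definition of the natural connection gives $R_\mathcal{H}\equiv 0$, so the horizontal curvature operators coincide and only the torsion remains.

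For the torsion: by Theorem \ref{carnotflatequiv} the flattening $\hat g$ at $p$ comes with an ordered orthonormal frame $\{X_1,X_2\}$ for $\mathcal{H}$ that is a Carnot frame at $p$ and that agrees with $\{\hat X_1,\hat X_2,\hat X_3\}$ at $p$ to weighted order three; in particular $(X_i)_p=(\hat X_i)_p$ for $i=1,2$, and, using the weighted-order-two agreement together with $\hat X_3=[\hat X_1,\hat X_2]$, also $(X_3)_p=[X_1,X_2]_p=[\hat X_1,\hat X_2]_p=(\hat X_3)_p$. Thus $\{(X_1)_p,(X_2)_p,(X_3)_p\}$ and $\{(\hat X_1)_p,(\hat X_2)_p,(\hat X_3)_p\}$ are the same basis of $T_pM$, and since $T$ and $\hat T$ are tensors it is enough to compare their values on this basis. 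Now evaluate the torsion of the natural connection at $p$ in the frame $\{X_1,X_2\}$: because $\{X_1,X_2\}$ is Carnot at $p$ we have $[X_2,X_3]_p=[X_3,X_1]_p=0$, so the scalar coefficients $(\xi_3)_p([X_2,X_3])$ and $(\xi_3)_p([X_3,X_1])$ occurring in parts (a)--(c) of the definition vanish; moreover the Carnot condition forces $(X_1)_p\theta=-(\xi_3)_p([X_2,X_3])=0$ and $(X_2)_p\theta=(\xi_3)_p([X_1,X_3])=0$ by equations (\ref{thetaderivative1})--(\ref{thetaderivative2}). Hence parts (a)--(c) collapse to $T(X_1,X_2)_p=-(X_3)_p$, $T(X_2,X_3)_p=0$, $T(X_3,X_1)_p=0$, which match $\hat T(\hat X_1,\hat X_2)_p=-(\hat X_3)_p$, $\hat T(\hat X_2,\hat X_3)_p=0$, $\hat T(\hat X_3,\hat X_1)_p=0$ under the identification of bases. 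Therefore $T$ and $\hat T$ agree at $p$, and with the curvature agreement already in hand, Theorem \ref{comptheorem} gives $\nabla=\hat\nabla$ at $p$.

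I expect the main obstacle to be the careful handling of the reduction and of the $\theta$-terms. One must be clear that the flattening controls only the weighted-order-$\le 3$ jets of the frame, which are not the full local data of a connection, so that ``agreeing at $p$'' has to be read as agreement of the characterizing tensors --- torsion and horizontal curvature --- at $p$, precisely the content licensed by Theorem \ref{comptheorem}; and one must verify that writing the natural connection in a frame that is merely Carnot \emph{at $p$} is enough to force every structure-function coefficient and every $\theta$-derivative appearing in (a)--(c) to vanish \emph{at $p$}. Everything on the $\mathbb{H}^1$ side is immediate from Lemma \ref{torsioncurvature}.
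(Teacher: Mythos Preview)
Your reduction to Theorem~\ref{comptheorem} and your use of Lemma~\ref{torsioncurvature} are exactly what the paper does. Where you diverge is in the torsion comparison: the paper keeps a \emph{general} oriented orthonormal frame $\{X_1,X_2\}$, writes $\{\hat X_1,\hat X_2\}$ as its rotation by an angle $\theta$, and pushes the identities $\hat T(\hat X_i,\hat X_j)=\cdots$ through the change of frame (using equation~(\ref{x3value}) and Corollary~\ref{torsionvaluecor}) until they literally reproduce items (a)--(c) of the definition of the natural connection. You instead invoke Theorem~\ref{carnotflatequiv} to pick the specific Carnot frame attached to the flattening, observe that in this frame the structure coefficients $(\xi_3)_p([X_2,X_3])$, $(\xi_3)_p([X_3,X_1])$ and the $\theta$-derivatives all vanish at $p$, so that both $T$ and $\hat T$ collapse to the bare Heisenberg values $(-X_3,0,0)$ on the common basis $(X_i)_p=(\hat X_i)_p$. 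This is correct and noticeably shorter; it trades the paper's explicit verification that the natural connection's formula is what $\hat T$ produces in \emph{every} frame for a one-frame check, which is all that tensoriality requires.

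Two small points. First, your citation ``equations (\ref{thetaderivative1})--(\ref{thetaderivative2})'' is off: the relevant range is (\ref{thetaderivative1})--(\ref{thetaderivative6}), and in fact the cleanest justification for $(X_i)_p\theta=0$ is simply Corollary~\ref{carnottorsion} (two Carnot frames at $p$ differ by a rotation whose first and second horizontal derivatives vanish at $p$), rather than re-deriving it from the proof of Theorem~\ref{carnotexistence}. Second, both you and the paper apply Theorem~\ref{comptheorem} ``at $p$'' to connections compatible with co-metrics ($g$ and $\hat g$) that agree only at $p$; you should say explicitly that because the flattening matches weighted jets through order three, the first derivatives of the torsion components entering the proof of Theorem~\ref{comptheorem} (e.g.\ $X_1[\xi^1(T(X_1,X_2))]$) also coincide at $p$, so the pointwise conclusion is legitimate.
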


\begin{proof} According to Lemma \ref{torsioncurvature}, at $p$, because $\hat{\nabla}$ is inherited from $\mathbb{H}^1$, the torsion and horizontal curvature operator of $\hat{\nabla}$ have values
\begin{enumerate}
\item[(i)] $\hat{T}(\hat{X}_1, \hat{X}_2) = -\hat{X}_3$
\item[(ii)] $\hat{T}(\hat{X}_2, \hat{X}_3) = 0$
\item[(iii)] $\hat{T}(\hat{X}_3, \hat{X}_1) = 0$
\item[(iv)] $\hat{R}_\mathcal{H} V = 0$ for all $V \in \mathcal{X}(\hat{U})$
\end{enumerate}
Corollary \ref{torsionvaluecor} tells us that statement (i) can be rewritten at $p$ as \begin{equation} (\hat{T}(X_1, X_2))_p = -(\hat{X}_3)_p = -[(\xi_3)_p ([X_2, X_3])] (X_1)_p -[(\xi_3)_p ([X_3, X_1])] (X_2)_p - (X_3)_p \end{equation} We then use statement (ii) to show that \begin{eqnarray} 0 & = & \hat{T}(\hat{X}_2, \hat{X}_3) \\ & = & \hat{T}( -\sin \theta \, X_1 + \cos \theta \, X_2, -(X_1 \theta) \, X_1 - (X_2 \theta) \, X_2 + X_3) \\ & = & [ (X_2 \theta) \sin \theta + (X_1 \theta) \cos \theta ] \hat{T}(X_1, X_2) + \sin \theta \, \hat{T}(X_3, X_1) + \cos \theta \, \hat{T}(X_2, X_3) \\ & = & -[ (X_2 \theta) \sin \theta + (X_1 \theta) \cos \theta ] \hat{X}_3 + \sin \theta \, \hat{T}(X_3, X_1) + \cos \theta \, \hat{T}(X_2, X_3) \\ \label{TX2X3} & = & \cos \theta \, ( \hat{T}(X_2, X_3) - (X_1 \theta) \, \hat{X}_3) + \sin \theta \, (\hat{T}(X_3, X_1) - (X_2 \theta) \hat{X}_3) \end{eqnarray} A similar calculation with statement (iii) tells us that \begin{equation} \label{TX3X1} 0 =  -\sin \theta \, ( \hat{T}(X_2, X_3) - (X_1 \theta) \, \hat{X}_3) + \cos \theta \, (\hat{T}(X_3, X_1) - (X_2 \theta) \hat{X}_3) \end{equation}  Equations (\ref{TX2X3}) and (\ref{TX3X1}) together imply that \begin{equation} \hat{T}(X_2, X_3) = (X_1 \theta) \, \hat{X_3} \qquad \text{and} \qquad \hat{T}(X_3, X_1) = (X_2 \theta) \, \hat{X_3} \end{equation} which, together with Corollary \ref{torsionvaluecor}, implies that \begin{equation} \hat{T}(X_2, X_3) = (X_2 \theta) [(\xi_3)_q ([X_2, X_3])] X_1 +(X_2 \theta) [(\xi_3)_q ([X_3, X_1])] X_2 + (X_2 \theta) \, X_3 \end{equation} and \begin{equation} \hat{T}(X_3, X_1) = (X_1 \theta) [(\xi_3)_q ([X_2, X_3])] X_1 + (X_1 \theta) [(\xi_3)_q ([X_3, X_1])] X_2 + (X_1 \theta) \, X_3 \end{equation} Thus $T = \hat{T}$ and $R_\mathcal{H} = \hat{R}_\mathcal{H}$ at $p$.  Thus, according to Theorem \ref{comptheorem}, $\nabla = \hat{\nabla}$ at $p$.   
\end{proof}

\section*{References}

\noindent [Bel] Bella\"{i}che, Andr\'{e}. \emph{The tangent space in sub-Riemannian geometry.} Sub-Riemannian geometry, 1--78, Progr. Math., 144, BirkhŠuser, Basel, 1996. \\



\noindent [HP] Robert K. Hladky and Scott D. Pauls. \emph{Constant mean curvature surfaces in sub-Riemannian geometry.} http://arXiv:math/0508333v1 (2005). \\

\noindent [HP2] Robert K. Hladky and Scott D. Pauls. \emph{Minimal surfaces in the roto-translation group with applications to a neuro-biological image completion model.} http://arXiv:math/0509636v1 (2005). \\


\noindent [Mon] Montgomery, Richard. \emph{A tour of subriemannian geometries, their geodesics and applications.} Mathematical Surveys and Monographs, 91. American Mathematical Society, Providence, RI, 2002. xx+259 pp. ISBN: 0-8218-1391-9 \\



\noindent [Tan] Tanaka, Noboru. \emph{A differential geometric study on strongly pseudo-convex manifolds.} Lectures in Mathematics, Department of Mathematics, Kyoto University, No. 9. Kinokuniya Book-Store Co., Ltd., Tokyo, 1975. iv+158 pp. \\

\noindent [Web] Webster, S.\@ M. \emph{Pseudo-Hermitian structures on a real hypersurface.} J. Differential Geom. 13 (1978), no. 1, 25--41.

\end{document}